\pgfplotsset{compat=1.16}
\providecommand{\Pihdiv}{\Pi_{h,\divergence}}
\providecommand{\tPihdiv}{\overline{\Pi}_{h,\divergence}}
\providecommand{\dx}{\,\mathrm{d}x}
\providecommand{\ds}{\,\mathrm{d}s}
\providecommand{\tria}{\mathcal{T}}
\providecommand{\nodes}{\mathcal{N}}
\providecommand{\edges}{\mathcal{E}}
\providecommand{\faces}{\mathcal{F}}
\providecommand{\edgesint}{\mathcal{E}^\circ}
\providecommand{\edgesbnd}{\mathcal{E}^\partial}
\providecommand{\nodesint}{\mathcal{N}^\circ}
\providecommand{\edgespace}{V_{\textup{tan}}} 
\providecommand{\Qaug}{Q_h^\textup{+}}
\begin{document}

\author[L.\ Diening]{Lars Diening}
\author[J.\ Storn]{Johannes Storn}
\author[T.\ Tscherpel]{Tabea Tscherpel}

\address[L.\ Diening, J.\ Storn, T.\ Tscherpel]{Department of Mathematics, University of Bielefeld, Postfach 10 01 31, 33501 Bielefeld, Germany}
\email{lars.diening@uni-bielefeld.de}
\email{jstorn@math.uni-bielefeld.de}
\email{ttscherpel@math.uni-bielefeld.de}
\thanks{This research was supported by the DFG through the CRC 1283 ``Taming uncertainty and profiting from randomness and low regularity in analysis, stochastics and their applications"}

\subjclass[2020]{
 	65N30, 		
        65N12, 		
        76D07  
}
\keywords{Fortin operator, Taylor--Hood element, inf-sup stability}

\title{Fortin Operator for the Taylor--Hood Element}

\begin{abstract}
  We design a Fortin operator for the lowest-order Taylor--Hood element in any dimension, which was previously constructed only in 2D. 
  In the construction we use tangential edge bubble functions for the divergence correcting operator. 
  This naturally leads to an alternative inf-sup stable reduced finite element pair. 
Furthermore, we provide a counterexample to the inf-sup stability and hence to existence of a Fortin operator for the $P_2$--$P_0$ and the augmented Taylor--Hood element in 3D.
\end{abstract}

\maketitle

\section{Introduction}
\label{sec:introduction}

Inf-sup stable finite element pairs are a necessity in the design of stable numerical schemes for the Stokes equations and related problems. 
A common tool to verify inf-sup stability are Fortin operators, which are bounded interpolation operators preserving the discrete divergence of a function. 
Besides stability results, Fortin operators are important in the design of a posteriori error estimators \cite{LedererMerdonSchoeberl2019}, their quasi-local approximation properties are needed when discretizing nonlinear incompressible fluid equations \cite{GiraultScott2003}, they are used in the investigation of pre-conditioners \cite{MardalSchoeberlWinther2013}, and they allow for stability results in different norms such as $W^{1,\infty}$ \cite{GNS.2005,GuzmanSanchez2015}. 
Hence, there are numerous contributions on the design of Fortin operators for various finite element pairs, including several papers \cite{BW.2020,Chen2014,Falk2008,GiraultScott2003,MardalSchoeberlWinther2013} on the Taylor--Hood element in dimension $d=2$. 

The lowest-order Taylor--Hood element uses continuous piecewise quadratic functions for the velocity space and of continuous, piecewise affine functions for the pressure space. 
This pair is of particular interest, since it is the lowest-order conforming stable element that ensures the same approximation order for velocity and pressure functions. 
Furthermore, a sequence of nested spaces is formed when refining the mesh, which is advantageous in the numerical analysis of adaptive schemes, cf.~\cite{Feischl19}.
While inf-sup stability is known to hold in dimension $d=3$ for the lowest-order Taylor--Hood element \cite{Bo.1997}, the construction of a Fortin operator is still an open problem. 
Closing this gap for all dimensions $d\geq 2$ is the main purpose of this paper. 
For higher-order versions of the Taylor--Hood element a Fortin operator is constructed in \cite{GiraultScott2003} for polynomial order $k \geq d$ of the velocity space.

A customary tool in the design of Fortin operators are face bubble functions. 
However, in dimensions $d\geq 3$ face bubble functions are not quadratic and hence are not contained in the discrete velocity space. This causes difficulties in the construction. 
A partial remedy are non-constructive approaches via (local) discrete inf-sup conditions \cite{ErnGuermond2004,GuzmanSanchez2015}. 
However, those approaches do not allow for certain beneficial properties achieved by the constructive design such as local $W^{1,p}$-stability for all $p \in [1,\infty]$.
As emphasized in \cite[p.~599]{GuzmanSanchez2015} and \cite[p.~53]{JKN.2018} a constructive design of a Fortin operator for the lowest-order Taylor--Hood element for $d\geq 3$ is still an open problem. 
In this paper we solve this open problem.

As usual we combine a divergence correcting operator with an interpolation operator. 
We overcome the need for face bubble functions by use of tangential edge bubble functions. 
The latter have previously been used in~\cite{MardalSchoeberlWinther2013} for the construction of a Fortin operator in 2D. 
Therein the authors exploit a correspondence between the tangential edge bubble functions and a basis of the lowest order N\'{e}d\'{e}lec elements of the first kind. 
Here we directly work with tangential edge bubble functions allowing for a construction in general dimensions. 

Our Fortin operator is locally $W^{1,p}$-stable and can be modified to obtain a locally $L^p$-stable version. 
Such a modification is of particular interest for a singularly perturbed Stokes problem and was investigated for 2D in~\cite{MardalSchoeberlWinther2013}.

The construction of our divergence correcting operator naturally leads to a reduced Taylor--Hood finite element pair, for which the velocity space is spanned by piecewise affine functions and tangential edge bubble functions. 
Our Fortin operator adapts to this reduced finite element pair and hence inf-sup stability is guaranteed. 
In 3D the dimension of the finite element pair is significantly smaller than the one of the MINI element developed in~\cite{ABF.1984}. 
For a standard uniform simplicial partition the dimension of the discrete function space is halved. 
To the best of our knowledge this reduced pair is known only in the 2D case,  cf.~\cite{MardalSchoeberlWinther2013}.  

A further alternative finite element pair is the augmented (sometimes called enriched or modified) Taylor--Hood element. 
The function space pair results from the Taylor--Hood element by adding piecewise constant functions to the pressure space. 
In 2D this enriched pair of discrete function spaces is still inf-sup stable, cf.~\cite{T.1990}. 
The same is true for higher-order augmented Taylor--Hood elements and higher dimension, if $k \geq d$ \cite{BCGG.2012}. 
However, numerical experiments in 3D show a lack of stability for the lowest-order version, see for example~\cite[Sec.~3]{GWW.2014}. 
We present a simple explicit example that confirms the experimental evidence. 

We construct the Fortin operator for the lowest-order Taylor--Hood element in Section~\ref{sec:diverg-pres-fort}. 
More specifically, Section~\ref{subsec:GeoSetup} contains some notation needed throughout this paper. 
In Section~\ref{subsec:TangBubFct} we introduce and investigate tangential edge bubble functions. 
Those bubble functions are utilized in Section~\ref{sec:Pi2} to design a divergence correcting operator. 
In Section~\ref{subsec:Fortin} we employ the latter to construct our Fortin operator. 
An $L^p$-stable version is discussed in Section~\ref{sec:LpFortin}.

We conclude with an investigation of alternative finite element pairs in Section \ref{sec:alternative}. 
In particular, for any dimension $d\geq 2$ we introduce and briefly discuss a reduced Taylor--Hood element in Section \ref{sec:reduced}. The augmented lowest-order Taylor--Hood element and the $P_2$--$P_0$ element are considered in Section \ref{sec:enriched-taylor-hood}.   

\section{Fortin operator}
\label{sec:diverg-pres-fort}

In this section we construct the divergence preserving Fortin operator for the lowest-order Taylor--Hood element for any dimension~$d \geq 2$.

\subsection{Geometric setup and notation}\label{subsec:GeoSetup}

Throughout this paper, let the domain $\Omega \subset \mathbb{R}^d$ be an open (bounded) polytope with underlying regular partition $\tria$ into closed $d$-simplices. 
We denote by $\nodes$ and $\edges$ the set of nodes and edges in $\tria$, respectively. 
Further, let $\nodes^\circ$ and $\edges^\circ$ denote the subsets of interior nodes and edges, and let $\nodes^\partial$ and $\edges^\partial$ be the subsets of boundary nodes and edges, respectively. 
We call the $(d-1)$-facets of simplices in~$\tria$ \emph{faces}. 
The set of all faces in $\tria$ is denoted by~$\faces$. 
For points~$a_1,\dots, a_m \in \Rd$ we denote by $[a_1,\dots,a_m]\subset \mathbb{R}^d$ the convex hull of~$\set{a_1,\dots, a_m}$. 
This allows us to represent (undirected) edges and faces by its nodes. 
The local mesh size is defined by $h_T \coloneqq \diameter(T)$ for all $T\in \tria$. 
Furthermore, the mesh size function is given by $h \coloneqq \sum_{T \in \tria} h_T \indicator_T$, where $\indicator_T$ is the indicator function of~$T$. 
We denote the Lebesgue measure of a set~$U \subset \Rd$ by $\abs{U}$. For a set $U \subset \Rd$ with $\abs{U} > 0$ we define $\dashint_U f \dx \coloneqq \abs{U}^{-1}\int_U f \dx$ as the integral mean of a function~$f$ over~$U$.

We require the following standard assumption on the simplices at the boundary, cf.~\cite[Thm.~8.8.2]{BBF.2013}.
\begin{assumption}
  \label{ass:boundary}
  Each $d$-simplex $T\in \tria$ has at least one interior node
  $i \in \nodes^\circ$.
\end{assumption}

For $1 \leq p \leq \infty$ let $W^{1,p}(\Omega)$ and $W^{1,p}(\Omega;\Rd)$ denote the standard Sobolev space of functions mapping to~$\mathbb{R}$ and $\Rd$, respectively. 
The corresponding notation shall be used for other function spaces of vector-valued functions. 
Let $W^{1,p}_0(\Omega)$ denote the subspace of functions with zero trace on the boundary $\partial \Omega$ of $\Omega$. 
Furthermore, for $1 \leq p \leq \infty$ let $L^p(\Omega)$ be the standard Lebesgue space. 
We denote by $L^p_0(\Omega)$ the subspace of functions $q$ with vanishing integral $\int_\Omega q \dx = 0$ and write $\skp{\bigcdot}{\bigcdot}$ for the $L^2(\Omega)$-scalar product.

Let $\mathcal{L}^s_k(\tria)$ for $s,k \in \setN_0$ be the Lagrange
space of functions in~$W^{s,1}(\Omega)$ that are piecewise
polynomials of order at most~$k$. 
Let $\phi_i$ with $i\in \nodes$ denote the standard nodal basis of~$\mathcal{L}^1_1(\tria)$ forming a partition of unity. 
Let $\omega_0(i)$ denote the support of~$\phi_i$ that coincides with the closed nodal patch of~$i \in \nodes$.

The lowest-order Taylor--Hood element uses quadratic Lagrange elements for the velocity and linear Lagrange elements for the
pressure, i.e., 
\begin{align*}
  V_h \coloneqq \mathcal{L}^1_2(\tria; \Rd) \cap W^{1,1}_0(\Omega;\Rd)\qquad\text{and}\qquad
  Q_h \coloneqq \mathcal{L}^1_1(\tria) \cap L^2_0(\Omega).
\end{align*}
As usual we employ a projection~$\Pi_1$ with approximation properties and a divergence correcting linear operator~$\Pi_2$ to construct the Fortin operator $\Pihdiv$ as
\begin{align*}
  \Pihdiv \coloneqq \Pi_1 + \Pi_2(\identity - \Pi_1).
\end{align*}
For $\Pi_1$ one may choose the standard Scott--Zhang operator~\cite{SZ.1990} applied componentwise. 
The challenge lies in constructing~$\Pi_2$ in the absence of face bubble functions. 
We introduce this operator in Section~\ref{sec:Pi2}. 
It is based on tangential edge bubble functions investigated in the following Section~\ref{subsec:TangBubFct}. 
In Section~\ref{subsec:Fortin} we collect the properties of the resulting Fortin operator~$\Pihdiv$ for the simplest choice of $\Pi_1$.

\subsection{Tangential bubble functions}\label{subsec:TangBubFct}
The main tool in our design of a divergence correcting
operator~$\Pi_2$ are tangential edge bubble functions studied in
this section.

Given an edge $[i,j] \in \edges$ with adjacent nodes $i,j\in \nodes$ and nodal basis functions $\phi_i,\phi_j \in \mathcal{L}_1^1(\tria)$, the (directed) \emph{tangential bubble function} is given by $\varphi_i\varphi_j(j-i)$. 
Let $\omega_{i,j} \coloneqq \textup{supp}(\varphi_i\varphi_j)$ denote the closed edge patch, and with slight abuse of notation also the set of simplices $\omega_{i,j} \coloneqq \lbrace T \in \tria\colon [i,j]\subset T\rbrace$.
\begin{lemma}[Tangential bubble]\label{lem:bubble-orth}
  For any edge $[i,j] \in \mathcal{E}$ and node $k \in \mathcal{N}$ we have
   \begin{align*}
     \skp{\divergence (\phi_i \phi_j(j-i))}{\phi_k} =  
\begin{cases}     
     0 \quad &\text{ if }  k \notin \lbrace i,j\rbrace,\\
\abs{\omega_{i,j}} \frac{d!}{(2+d)!} \quad &\text{ if }  k = i,\\
- \abs{\omega_{i,j}} \frac{d!}{(2+d)!} \quad &\text{ if }  k = j. 
\end{cases}
   \end{align*}
\end{lemma}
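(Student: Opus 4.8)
The plan is to reduce the divergence to a directional derivative and then to scalar integrals of products of barycentric coordinates. Since $j-i \in \Rd$ is a constant vector, the weak divergence of the tangential bubble is the directional derivative $\divergence(\phi_i\phi_j(j-i)) = (j-i)\cdot \nabla(\phi_i\phi_j)$. The field $\phi_i\phi_j(j-i)$ is globally continuous, piecewise polynomial, and vanishes on $\partial\omega_{i,j}$; hence its weak divergence carries no singular interface contribution and coincides with the elementwise classical divergence, which is supported in $\omega_{i,j}$. I would record this reduction first.

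The key step is the elementwise evaluation of the directional derivative. On any simplex $T \in \omega_{i,j}$ the hat functions $\phi_i,\phi_j$ are affine, so $\nabla \phi_i,\nabla\phi_j$ are constant and the directional derivative of an affine function equals the difference of its nodal values: $(j-i)\cdot\nabla\phi_i = \phi_i(j)-\phi_i(i) = -1$ and $(j-i)\cdot\nabla\phi_j = \phi_j(j)-\phi_j(i) = 1$. By the product rule this gives $\divergence(\phi_i\phi_j(j-i)) = \phi_i - \phi_j$ on every simplex of the patch. The point I would stress is that these directional derivatives take the \emph{same} constant value on each $T\in\omega_{i,j}$ simultaneously, which is exactly what makes the resulting expression globally clean.

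With this in hand the claim becomes $\skp{\divergence(\phi_i\phi_j(j-i))}{\phi_k} = \int_{\omega_{i,j}} (\phi_i-\phi_j)\phi_k\dx$. For $k \notin\{i,j\}$ only simplices $T$ having $k$ as a vertex contribute, and on each such $T$ the terms $\int_T \phi_i\phi_k\dx$ and $\int_T \phi_j\phi_k\dx$ are integrals of products of two distinct barycentric coordinates, hence equal, so they cancel and the pairing vanishes. For $k=i$ (and symmetrically $k=j$) I would invoke the standard simplex formula $\int_T \lambda_0^{\alpha_0}\cdots\lambda_d^{\alpha_d}\dx = d!\,\abs{T}\,\frac{\alpha_0!\cdots\alpha_d!}{(d+\alpha_0+\cdots+\alpha_d)!}$, which gives $\int_T\phi_i^2\dx = \frac{2\,d!}{(d+2)!}\abs{T}$ and $\int_T\phi_i\phi_j\dx = \frac{d!}{(d+2)!}\abs{T}$; their difference is $\frac{d!}{(d+2)!}\abs{T}$, and summing over $T\in\omega_{i,j}$ produces $\abs{\omega_{i,j}}\frac{d!}{(d+2)!}$, with the opposite sign for $k=j$.

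The computation is almost entirely routine; the only point requiring genuine care is the first reduction, namely verifying that the weak divergence equals the elementwise divergence with no singular interface part. This is where I would be explicit: testing against $\psi \in C_c^\infty(\Omega)$ and integrating by parts elementwise, the contributions on interior faces cancel because the normal component $\phi_i\phi_j\,(j-i)\cdot n$ is continuous across faces (the bubble being globally continuous), while the contributions on $\partial\omega_{i,j}$ vanish because $\phi_i\phi_j = 0$ there. This legitimizes the identification $\divergence(\phi_i\phi_j(j-i)) = (\phi_i-\phi_j)\,\indicator_{\omega_{i,j}}$, after which the proof is just the bookkeeping of the three cases and a single classical simplex integral.
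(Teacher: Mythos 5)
Your proof is correct, but it takes a genuinely different route from the paper's. The paper integrates by parts to move the derivative onto the test function $\phi_k$, so that everything hinges on the edge-directional derivative $(j-i)\cdot\nabla\phi_k|_T$, which equals $0$, $-1$, $+1$ in the three cases; only the single integral $\int_T \phi_i\phi_j\dx = \frac{d!}{(2+d)!}\abs{T}$ is then needed. The price is a boundary term $\int_{\partial\omega_{i,j}}\phi_i\phi_j\phi_k\,(j-i)\cdot\nu\ds$, whose vanishing for boundary edges requires the geometric observation that $(j-i)\cdot\nu = 0$ on faces $f\subset\partial\Omega$ with $[i,j]\subset f$. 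You avoid integrating by parts against $\phi_k$ altogether: you apply the product rule and the same directional-derivative trick to $\phi_i,\phi_j$ themselves, obtaining the pointwise identity $\divergence(\phi_i\phi_j(j-i)) = (\phi_i-\phi_j)\indicator_{\omega_{i,j}}$, after which the lemma is bookkeeping with the barycentric integral formula, plus the cancellation $\int_T\phi_i\phi_k\dx = \int_T\phi_j\phi_k\dx$ for $k\notin\{i,j\}$. Your route buys a stronger, pointwise description of the divergence and dispenses with boundary integrals entirely; the paper's route needs fewer integral evaluations. One small imprecision in your justification that the weak divergence has no singular interface part: for a boundary edge $[i,j]\in\edgesbnd$ the product $\phi_i\phi_j$ does \emph{not} vanish on those faces of $\partial\omega_{i,j}$ that lie in $\partial\Omega$ and contain $[i,j]$ --- this is exactly the situation the paper's tangentiality argument addresses. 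Your conclusion is nevertheless unaffected, since such faces lie on $\partial\Omega$, where your test function $\psi\in C_c^\infty(\Omega)$ vanishes; alternatively, the field is Lipschitz on $\overline{\Omega}$, so its weak divergence on the open set $\Omega$ is the a.e.\ elementwise one regardless, and the faces of $\partial\omega_{i,j}$ interior to $\Omega$ do satisfy $\phi_i\phi_j|_f = 0$ as you claim.
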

\begin{proof}
  If the node $k \notin \omega_{i,j}$ is not contained in the closed edge patch $\omega_{i,j}$, then the supports of $\phi_k$ and of $\phi_i\phi_j$ do not intersect and hence the expression vanishes. 
If the node $k \in \omega_{i,j}$ is contained in the edge patch  integrating by parts yields
  \begin{align*}
    \skp{\divergence (\phi_i \phi_j(j-i))}{\phi_k}
    &= - \int\limits_{\omega_{i,j}} \phi_i \phi_j (j-i) \cdot \nabla \phi_k \dx +\!\!\!\! \int\limits_{\partial \omega_{i,j}} \!\!\!\!\phi_i \phi_j \phi_k (j-i) \cdot \nu \ds,
  \end{align*}
  where $\nu$ is the outer unit normal vector on $\partial \omega_{i,j}$. 
Since we have $(j-i) \cdot \nu|_f = 0$ on all faces $f\in \mathcal{F}$ with $f \subset \partial \Omega$ and $[i,j] \subset f$ as well as $\phi_i\phi_j|_f = 0$ on all other faces $f\in \mathcal{F}$ with $f\subset \partial \omega_{i,j}$, the boundary integral is zero.  
Hence, we obtain that
  \begin{align}
    \label{eq:aux1}
    \skp{\divergence (\phi_i \phi_j(j-i))}{\phi_k} 
    &= - \sum_{T \in \omega_{i,j}} \int_{T} \phi_i \phi_j (j-i) \cdot \nabla \phi_k \dx.
  \end{align}
  If the node $k \notin \lbrace i,j\rbrace$, we have that $\phi_k|_{[i,j]}=0$. 
  Therefore, the piecewise constant function $\nabla \phi_k \in \mathcal{L}^0_0(\tria;\mathbb{R}^d)$ is orthogonal to the edge vector~$(j-i)$ on each~$T \in \omega_{i,j}$. 
  Thus, for each node $k \notin[i,j]$ the integrals vanishes and the claim follows.

  If $k = i$, we use the identity $\varphi_i(x) = 1 + \nabla \varphi_i|_T \cdot (x-i)$ for all $x\in T$ and $T \in \omega_{i,j}$ to conclude that $\nabla \varphi_i|_T \cdot (j-i) = -1$. 
  Applying this in~\eqref{eq:aux1} yields
  \begin{align*}
    \skp{\divergence (\phi_i \phi_j(j-i))}{\phi_k}
    &=  \sum_{T\in \omega_{i,j}}\int_T \phi_i \phi_j \dx
    =  \abs{\omega_{i,j}} \frac{d!}{(2+d)!}. 
  \end{align*}
  Exchanging the roles of $i$ and $j$ shows the claim for $k=j$ and finishes the proof. 
\end{proof}
Lemma~\ref{lem:bubble-orth} motivates for any edge $[i,j] \in \edges$ the definition of the \emph{normalized tangential edge bubble function}
\begin{align}\label{def:psi}
  b_{i,j} &\coloneqq \frac{(2+d)!}{d! \abs{\omega_{i,j}}} \phi_i \phi_j(j-i). 
\end{align}
These functions satisfy for any edge $[i,j] \in \edges$ and any node $k\in \nodes$ the identity
\begin{align}
  \label{eq:orth-bubble1}
    \skp{\divergence b_{i,j}}{\phi_k} 
    = \delta_{i,k} - \delta_{j,k}.
\end{align}
If $[i,j] \in \edgesint$ is an interior edge, then the function $b_{i,j}$ is contained in~$V_h$. 
However, if the edge $[i,j] \in \edgesbnd$ is on the boundary $\partial \Omega$, then the function $b_{i,j}$ does not vanish on~$\partial \Omega$ and is therefore not an element of~$V_h$. 
Thus, it cannot be used in the divergence correction. 
For this reason in the following we replace it using tangential bubble functions associated to adjacent interior edges.
By Assumption~\ref{ass:boundary} for each $[i,j] \in \edgesbnd$ there exists an interior node~$m \in \mathcal{N}^\circ$ such that
$[i,m]\in \edgesint$ and $[j,m]\in \edgesint$.  
For any edge $[i,j]\in \edges$ we define the \emph{modified tangential bubble function} $\psi_{i,j}$ by
\begin{align*}
  \psi_{i,j} \coloneqq
               \begin{cases}
                 b_{i,j} & \text{ if }[i,j] \in \edgesint,
                 \\
                 b_{i,m} + b_{m,j} & \text{ if } [i,j] \in \edgesbnd \text{ for chosen } m \in \mathcal{N}^\circ \text{ with }[i,m],[j,m] \in \edgesint.
               \end{cases}
\end{align*}
As for the tangential bubble functions $b_{i,j}$ we have $\psi_{j,i} = - \psi_{i,j}$.
Since only interior tangential bubble functions are used each $\psi_{i,j}$ vanishes on~$\partial \Omega$ and consequently one has that $\psi_{i,j} \in V_h$ for all edges $[i,j] \in \edges$.
Note that for each interior edge $[i,j] \in \edgesint$ the function $\psi_{i,j}$ is supported on the edge patch $\omega_{i,j}$. 
In contrast, for boundary edges $[i,j] \in \edgesbnd$ the support of $\psi_{i,j}$ given by $\support(\psi_{i,j}) =  \omega_{i,m} \cup \omega_{j,m}$ is larger. 
The identity in~\eqref{eq:orth-bubble1} also holds
for~$\psi_{i,j}$, that is
\begin{align}
  \label{eq:orth-bubble2}
    \skp{\divergence \psi_{i,j}}{\phi_k} 
    = \delta_{i,k} - \delta_{j,k}
     \qquad \text{for any } [i,j] \in \edges \text{ and any }k \in \nodes.
\end{align}
We denote the space spanned by interior tangential bubble functions by
\begin{align}\label{eq:edgespace}
  \edgespace \coloneqq
  \linearspan
  \{\phi_i \phi_j (j-i) \colon [i,j] \in \edgesint\} \subset V_h.
\end{align}
By definition we have that $\psi_{i,j} \in \edgespace$ for any edge $[i,j] \in \edges$.

\subsection{\texorpdfstring{Divergence correcting operator~$\Pi_2$}{Divergence correcting operator}}
\label{sec:Pi2}

In this section we construct the new divergence correcting
operator~$\Pi_2$ based on interior tangential bubble functions.
For each node $i \in \nodes$ we define the operator
$\Pi_{2,i} \colon W^{1,1}_0(\Omega;\Rd) \to \edgespace$ by
\begin{align}
  \label{eq:Pi2i}
  \Pi_{2,i}v &\coloneqq  \sum_{j\in \nodes \colon [i,j] \in \edges} \skp{\divergence(\phi_i v)}{\phi_j} \psi_{j,i} =
   -\!\!\!\!\! \sum_{j \in \nodes \colon [i,j] \in \edges}  \skp{\phi_i v}{ \nabla \phi_j} \psi_{j,i}.
\end{align}
The second identity allows us to extend~$\Pi_{2,i}$ to an operator $\Pi_{2,i} \colon L^1(\Omega;\Rd) \to \edgespace$.
The partition of unity $1 = \sum_{\ell\in \mathcal{N}} \phi_\ell$ and application of the identity $\langle \divergence( \phi_iv),1\rangle = 0$ for $i \in \nodes$ yield for all $i,k \in \mathcal{N}$ and $v\in W^{1,1}_0(\Omega;\Rd)$ that 
\begin{align}
  \label{eq:div-Pi2i}
  \begin{aligned}
    \skp{\divergence(\Pi_{2,i} v)}{\phi_k} &= \sum_{j\in \nodes \colon [i,j] \in \edges} \skp{\divergence(\phi_i v)}{\phi_j}\, \skp{\divergence \psi_{j,i}}{\phi_k}
    \\
    &= \sum_{j \in \nodes \colon [i,j] \in \edges} \skp{\divergence(\phi_i v)}{\phi_j}\, (\delta_{j,k}-\delta_{i,k})
    \\
    &=
    \begin{cases}
      \skp{\divergence(\phi_i v)}{\phi_k} \quad& \text{for } k \neq i;
      \\
      \skp{\divergence(\phi_i v)}{-\sum_{j \in \mathcal{N} \setminus\set{k}} \phi_j}  \quad & \text{for }k = i
    \end{cases}
  \\
  &=
    \begin{cases}
      \skp{\divergence(\phi_i v)}{\phi_k} & \text{for }k \neq i,
      \\
      \skp{\divergence(\phi_i v)}{\phi_k-1} \quad \qquad\quad \;\;\; & \text{for }k = i
    \end{cases}
  \\
  &= \skp{\divergence(\phi_i v)}{\phi_k}.
\end{aligned}
\end{align}
We define for any $v \in L^{1}(\Omega;\Rd)$ the global operator
\begin{align}\label{def:Pi2}
  \Pi_2 v \coloneqq \sum_{i \in \mathcal{N}} \Pi_{2,i}v.  
\end{align}
Summing over all edges yields for all $v \in W^{1,1}_0(\Omega;\Rd)$ the identity
\begin{align}
  \label{eq:Pi2o-summary}
  \Pi_2 v &= \sum_{[i,j] \in \edges} \big( \skp{\divergence(\phi_i v)}{\phi_j} - \skp{\divergence(\phi_j v)}{\phi_i}\big) \psi_{j,i}.
\end{align}
For functions $v \in L^1(\Omega;\Rd)$ the operator is defined via the equivalent formulation
\begin{align}
  \label{eq:Pi2o-summaryL1}
  \Pi_2 v
  &= \sum_{[i,j] \in \edges} \skp{v}{-\phi_i \nabla \phi_j+ \phi_j \nabla \phi_i}\, \psi_{j,i}.
\end{align}
By \eqref{eq:div-Pi2i} and the partition of unity $1 = \sum_{\ell \in \nodes} \phi_\ell$ we find for all $v \in W^{1,1}_0(\Omega;\Rd)$ that
\begin{align}
  \label{eq:P2-div}
  \skp{\divergence \Pi_2v}{\phi_k} 
  &= \sum_{i \in \mathcal{N}}\skp{\divergence\Pi_{2,i}v}{\phi_k}  
    = \sum_{i \in \mathcal{N}}\skp{\divergence(\phi_i v)}{\phi_k}  = \skp{\divergence  v}{\phi_k}. 
\end{align}
For any $K \subset \overline{\Omega}$ we define the closed nodal and edge patch/neighborhood by
\begin{align*}
  \omega_0(K) & \coloneqq \bigcup \lbrace T' \in \tria \colon  \text{there exists } j\in \nodes \text{ with } j\in K \cap T' \rbrace,
  \\
  \omega_1(K) & \coloneqq \bigcup \lbrace T' \in \tria \colon \text{there exists } e\in \edges\text{ with }e\subset K \cap T'\rbrace.
\end{align*}
Note that $\omega_0(i) = \omega_0(\set{i})$ and $\omega_{i,j} = \omega_1([i,j])$ for $i,j \in \nodes$ and $[i,j] \in \edges$.
\begin{proposition}[Properties of $\Pi_2$]
  \label{pro:Pi2-div}
  The operator $\Pi_2 \colon L^1(\Omega;\Rd) \to \edgespace$ satisfies the following properties.
  \begin{enumerate}
  \item \label{itm:Pi2-div}
    (Divergence) We have for all nodes $k \in \mathcal{N}$ and any $v \in W^{1,1}_0(\Omega;\Rd)$ that
    \begin{align*}
      \skp{\divergence(\Pi_2v)}{\phi_k}= \skp{\divergence{v}}{\phi_k}.
    \end{align*}  
  \item \label{itm:Pi2-stab} (Local stability) 
    One has for all $v \in L^1(\Omega;\Rd)$ and any $T\in \tria$ that
    \begin{align*}
      \norm{\Pi_2v}_{L^{\infty}(T)} \lesssim \dashint_{\omega_0(T)} \abs{v} \dx.
    \end{align*}
  \end{enumerate}
      If $T \cap \partial \Omega = \emptyset$ the estimate in \ref{itm:Pi2-stab} holds with $\omega_0(T)$ replaced by the smaller set $\omega_1(T)$.  The hidden constant depends only on the dimension $d$ and the shape regularity of~$\tria$.
\end{proposition}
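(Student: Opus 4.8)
The divergence identity in \ref{itm:Pi2-div} requires no new work: it is exactly the content of the display \eqref{eq:P2-div}, which was obtained from \eqref{eq:div-Pi2i} and the partition of unity. So the plan for \ref{itm:Pi2-div} is simply to invoke \eqref{eq:P2-div}.

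The substance lies in the local stability \ref{itm:Pi2-stab}. The plan is to start from the edgewise representation \eqref{eq:Pi2o-summaryL1} and, on a fixed simplex $T\in\tria$, bound each summand by the modulus of its coefficient times $\norm{\psi_{j,i}}_{L^\infty(T)}$. First I would determine which edges contribute: the coefficient $-\phi_i\nabla\phi_j+\phi_j\nabla\phi_i$ is supported on the edge patch $\omega_{i,j}$, and $\psi_{j,i}$ is supported on $\omega_{i,j}$ for an interior edge and on $\omega_{i,m}\cup\omega_{j,m}$ for a boundary edge; hence only those edges whose bubble support meets $T$ enter, and by shape regularity their number is bounded by a constant $C(d,\tria)$.

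For the two scaling factors I would use elementary estimates. From $\abs{\nabla\phi_\ell}\lesssim h_T^{-1}$ and $\phi_\ell\le 1$ one gets $\abs{\skp{v}{-\phi_i\nabla\phi_j+\phi_j\nabla\phi_i}}\lesssim h_T^{-1}\int_{\omega_{i,j}}\abs{v}\dx$, while \eqref{def:psi} together with $\abs{j-i}\lesssim h_T$ and $\abs{\omega_{i,j}}\gtrsim h_T^{d}$ yields $\norm{\psi_{j,i}}_{L^\infty(T)}\lesssim h_T^{1-d}$ (for a boundary edge only one of $b_{i,m},b_{m,j}$ is nonzero on $T$, with the same bound). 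Multiplying gives the bound $h_T^{-d}\int_{\omega_{i,j}}\abs{v}\dx\lesssim\dashint_{\omega_{i,j}}\abs{v}\dx$ for each summand. The geometric observation I would verify next is that $\omega_{i,j}\subset\omega_0(T)$ for every contributing edge: if $[i,j]\subset T$ this is immediate, and if a boundary edge reaches $T$ through $\omega_{i,m}$, say, then $i$ is a node of $T$, so each simplex of $\omega_{i,j}$ shares the node $i$ with $T$. Since $\abs{\omega_0(T)}\lesssim\abs{\omega_{i,j}}$ by shape regularity, each summand is $\lesssim\dashint_{\omega_0(T)}\abs{v}\dx$, and summing the boundedly many contributions yields \ref{itm:Pi2-stab}.

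For the improvement to $\omega_1(T)$ when $T\cap\partial\Omega=\emptyset$, I would argue that no boundary edge contributes: a boundary edge has both endpoints on $\partial\Omega$, whereas $T\cap\partial\Omega=\emptyset$ forces every node of $T$ to be interior, so neither $[i,m]\subset T$ nor $[j,m]\subset T$ can hold. Only the interior edges $[i,j]\subset T$ then contribute, and for these $\omega_{i,j}=\omega_1([i,j])\subset\omega_1(T)$, giving the sharper bound. I expect the main obstacle to be the bookkeeping for boundary edges: tracking through the two-patch support $\omega_{i,m}\cup\omega_{j,m}$ of $\psi_{j,i}$ to confirm that the coefficient is nevertheless integrated over a patch contained in $\omega_0(T)$, and that only finitely many such boundary edges influence a given $T$.
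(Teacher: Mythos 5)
Your proposal is correct and follows essentially the same route as the paper's proof: both items are handled identically, starting for \ref{itm:Pi2-stab} from \eqref{eq:Pi2o-summaryL1}, bounding each summand by its coefficient times $\norm{\psi_{i,j}}_{L^\infty(T)}$, counting the contributing edges via shape regularity, and splitting into interior and boundary edges with the inclusions $\omega_{i,j}\subset\omega_1(T)$ resp.\ $\omega_{i,j}\subset\omega_0(T)$, including the same observation that boundary edges cannot contribute when $T\cap\partial\Omega=\emptyset$. One harmless slip: for a boundary edge both $b_{i,m}$ and $b_{m,j}$ can be nonzero on $T$ (namely when $T$ contains $i$, $j$ and $m$), but the triangle inequality still yields $\norm{\psi_{i,j}}_{L^\infty(T)}\lesssim h_T^{1-d}$, so nothing breaks.
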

\begin{proof}
  Since~\ref{itm:Pi2-div} follows by \eqref{eq:P2-div}, it remains to prove~\ref{itm:Pi2-stab}. 
  For $T \in \tria$ we estimate
  \begin{align}\label{eq:addends}
    \norm{\Pi_2v}_{L^{\infty}(T)}
    &\leq \sum_{[i,j] \in \edges} 
      \bigabs{ \skp{v}{-\phi_i \nabla \phi_j+ \phi_j \nabla \phi_i}}\, \norm{\psi_{i,j}}_{L^\infty(T)}.
  \end{align}
  Note that $\norm{\psi_{i,j}}_{L^\infty(T)}$ is zero unless
  $T \subset  \support(\psi_{i,j})$. 
  Hence, the number of terms in the sum is
  bounded by a constant only depending on the shape regularity. 
  
  Suppose first that $[i,j] \in \edgesint$ is an interior edge with $T\subset \support(\psi_{i,j})$. 
  Then we have that $T \subset \omega_{i,j}$. 
  By the definition of the tangential bubble functions we have
  \begin{align*}
    \norm{b_{i,j}}_{L^{\infty}(\Omega)} \lesssim \frac{1}{\abs{\omega_{i,j}}}\abs{j-i} \lesssim \frac{h_T}{\abs{\omega_{i,j}}}.
  \end{align*}
For $\psi_{i,j}$ the analogous estimate holds true. 
  Since $T\subset  \support(-\phi_i \nabla \phi_j+ \phi_j \nabla \phi_i) =\omega_{i,j} \subset \omega_1(T)$  the corresponding term in \eqref{eq:addends} may be estimated as
  \begin{align*}
    \bigabs{ \skp{v}{-\phi_i \nabla \phi_j+ \phi_j \nabla \phi_i}}\, \norm{\psi_{i,j}}_{L^\infty(T) } &\lesssim \int_{\omega_{i,j}} \abs{v}\, h_T^{-1}\dx\, \frac{h_T}{\abs{\omega_{i,j}}}
                                                                                                        \lesssim \dashint_{\omega_1(T)} \abs{v}\dx. 
  \end{align*}
  Suppose now that $[i,j] \in \edgesbnd$ is an edge on the boundary with $T\subset \support(\psi_{i,j})$. 
    Recall that there is an interior node $m \in \nodesint$ with  $\psi_{i,j} = b_{i,m} + b_{m,j}$ and thus it follows that  $T \in \omega_{i,m} \cup \omega_{m,j}$. 
    Consequently, $T$ contains the node $i$ or $j$ and hence it follows that $\omega_{i,j} \subset \omega_0(T)$. 
Therefore, the respective term in \eqref{eq:addends} is bounded by
  \begin{align*}
    \bigabs{ \skp{v}{-\phi_i \nabla \phi_j+ \phi_j \nabla \phi_i}}\, \norm{\psi_{i,j}}_{L^\infty(T) } &\lesssim \int_{\omega_{i,j}} \abs{v}\, h_T^{-1}\dx\, \frac{h_T}{\abs{\omega_{i,j}}}
                                                                                                        \lesssim \dashint_{\omega_0(T)} \abs{v}\dx.
  \end{align*}
  Combining both cases and the inclusion $\omega_1(T) \subset \omega_0(T)$ prove~\ref{itm:Pi2-stab}.
\end{proof}

\subsection{Fortin operator}
\label{subsec:Fortin}

Let $\Pi_1\colon W^{1,1}(\Omega;\Rd) \to \mathcal{L}^1_2(\tria;\Rd)$ denote the standard Scott--Zhang operator~\cite{SZ.1990} which preserves discrete traces applied componentwise. 
We define the Fortin operator $\Pihdiv$ by
\begin{align}
  \label{eq:def-Phihdiv}
  \Pihdiv \coloneqq \Pi_1 + \Pi_2(\identity - \Pi_1).
\end{align}
The Scott--Zhang operator~$\Pi_1$ is a linear projection that maps $W^{1,1}(\Omega;\Rd)$ to $\mathcal{L}^1_2(\tria;\Rd)$ and $W^{1,1}_0(\Omega;\Rd)$ to $V_h$ and satisfies for all $T\in \tria$ the local $W^{1,1}$-stability
\begin{align}
  \label{eq:SZ-W11-stab}
  \norm{\Pi_1 v}_{L^{\infty}(T)}
  &\lesssim
    \dashint_{\omega_0(T)} \abs{v} \dx + 
    h_T \dashint_{\omega_0(T)} \abs{\nabla v} \dx\quad\text{for all }v\in W^{1,1}(\Omega;\mathbb{R}^d).
\end{align}
The hidden constant depends only on $d$ and the shape regularity of $\tria$.

\begin{proposition}[Fortin operator]
  \label{pro:fortin-basic}
  The operator $\Pihdiv$ is a linear projection from $W^{1,1}(\Omega;\Rd)$ to $\mathcal{L}^1_2(\tria;\Rd)$ and maps $W^{1,1}_0(\Omega;\Rd)$ to $V_h$. It preserves discrete traces and satisfies the following additional properties.
  \begin{enumerate}
  \item \label{itm:fortin-basic-div}
    \emph{(Divergence preserving)}
    We have for all $v \in W^{1,1}_0(\Omega;\Rd)$ and all $q_h \in Q_h$ that
    \begin{align*}
      \skp{\divergence\,\Pihdiv v}{q_h}= \skp{\divergence{v}}{q_h}.
    \end{align*}  
  \item
    \label{itm:fortin-basic-stab}
    \emph{(Local $W^{1,1}$-stability)}
    We have for all $v \in W^{1,1}(\Omega;\Rd)$ 
    \begin{align*}
      \norm{\Pihdiv v}_{L^{\infty}(T)}
      &\lesssim
        \dashint_{\omega_0(\omega_0(T))} \abs{v} \dx + 
        h_T \dashint_{\omega_0(\omega_0(T))} \abs{\nabla v} \dx.
    \end{align*}
  \end{enumerate}
  If $T \cap \partial \Omega = \emptyset$ the estimate in \ref{itm:fortin-basic-stab} holds with $\omega_0(\omega_0(T))$ replaced by the smaller set $\omega_0(\omega_1(T))$. 
  The hidden constant depends only on $d$ and the shape regularity of $\tria$. 
\end{proposition}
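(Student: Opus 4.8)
The plan is to establish the three claims in turn, leveraging the properties of the Scott--Zhang operator $\Pi_1$ recorded in~\eqref{eq:SZ-W11-stab} together with Proposition~\ref{pro:Pi2-div} for $\Pi_2$. For the structural claims, note first that the image of $\Pihdiv$ lies in $\mathcal{L}^1_2(\tria;\Rd)$, since $\Pi_1$ maps into $\mathcal{L}^1_2(\tria;\Rd)$ and $\Pi_2$ maps into $\edgespace \subset V_h \subset \mathcal{L}^1_2(\tria;\Rd)$. To see that $\Pihdiv$ is a projection, I would use that $\Pi_1$ is itself a projection: for $v_h \in \mathcal{L}^1_2(\tria;\Rd)$ one has $(\identity - \Pi_1)v_h = 0$, hence $\Pihdiv v_h = \Pi_1 v_h = v_h$, so $\Pihdiv$ restricts to the identity on its range. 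For the mapping property, if $v \in W^{1,1}_0(\Omega;\Rd)$ then $\Pi_1 v \in V_h$ and thus $(\identity - \Pi_1)v \in W^{1,1}_0(\Omega;\Rd)$, whence $\Pi_2(\identity - \Pi_1)v \in \edgespace \subset V_h$, giving $\Pihdiv v \in V_h$. Finally, since every function in $\edgespace$ vanishes on $\partial\Omega$, the operator $\Pi_2(\identity - \Pi_1)$ does not alter boundary traces, so $\Pihdiv$ inherits the discrete-trace preservation of $\Pi_1$.

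For the divergence claim~\ref{itm:fortin-basic-div}, writing $q_h = \sum_{k \in \nodes} q_h(k)\,\phi_k$ in the nodal basis reduces the identity to showing $\skp{\divergence \Pihdiv v}{\phi_k} = \skp{\divergence v}{\phi_k}$ for every $k \in \nodes$. Setting $w \coloneqq (\identity - \Pi_1)v \in W^{1,1}_0(\Omega;\Rd)$ and splitting $\Pihdiv v = \Pi_1 v + \Pi_2 w$, I would apply Proposition~\ref{pro:Pi2-div}\ref{itm:Pi2-div} to obtain $\skp{\divergence \Pi_2 w}{\phi_k} = \skp{\divergence w}{\phi_k}$, so that the two contributions telescope:
\begin{align*}
  \skp{\divergence \Pihdiv v}{\phi_k}
  = \skp{\divergence \Pi_1 v}{\phi_k} + \skp{\divergence(v - \Pi_1 v)}{\phi_k}
  = \skp{\divergence v}{\phi_k}.
\end{align*}
Summing against the coefficients $q_h(k)$ then yields the claim.

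For the stability claim~\ref{itm:fortin-basic-stab}, I would start from the triangle inequality $\norm{\Pihdiv v}_{L^\infty(T)} \leq \norm{\Pi_1 v}_{L^\infty(T)} + \norm{\Pi_2 w}_{L^\infty(T)}$ with $w = v - \Pi_1 v$. The first term is controlled directly by~\eqref{eq:SZ-W11-stab} over $\omega_0(T) \subset \omega_0(\omega_0(T))$. For the second term, Proposition~\ref{pro:Pi2-div}\ref{itm:Pi2-stab} gives $\norm{\Pi_2 w}_{L^\infty(T)} \lesssim \dashint_{\omega_0(T)} \abs{v - \Pi_1 v}\dx$, and it remains to bound the mean of $\Pi_1 v$ over $\omega_0(T)$. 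The key step, and the only genuinely technical point, is to cover $\omega_0(T)$ by the simplices $T' \subset \omega_0(T)$, apply~\eqref{eq:SZ-W11-stab} on each such $T'$, and use shape regularity ($\abs{T'} \approx \abs{\omega_0(T)}$, $h_{T'} \approx h_T$, and a bounded number of simplices) together with the inclusion $\omega_0(T') \subset \omega_0(\omega_0(T))$ to arrive at $\dashint_{\omega_0(T)} \abs{\Pi_1 v}\dx \lesssim \dashint_{\omega_0(\omega_0(T))}\abs{v}\dx + h_T \dashint_{\omega_0(\omega_0(T))}\abs{\nabla v}\dx$. Combining the two terms proves the estimate, the remaining passages being algebraic consequences of the established properties of $\Pi_1$ and $\Pi_2$. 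For the interior refinement, when $T \cap \partial\Omega = \emptyset$ the sharper bound $\norm{\Pi_2 w}_{L^\infty(T)} \lesssim \dashint_{\omega_1(T)}\abs{w}\dx$ from Proposition~\ref{pro:Pi2-div} replaces the first patch $\omega_0(T)$ by $\omega_1(T)$, and since $\omega_0(T) \subset \omega_0(\omega_1(T))$ the $\Pi_1$ term still fits, yielding the version with $\omega_0(\omega_1(T))$.
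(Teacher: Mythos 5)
Your proposal is correct and follows essentially the same route as the paper: the structural and trace claims from the mapping properties of $\Pi_1$ and $\Pi_2$, the divergence identity by telescoping via Proposition~\ref{pro:Pi2-div}\ref{itm:Pi2-div}, and the stability bound by the triangle inequality combined with Proposition~\ref{pro:Pi2-div}\ref{itm:Pi2-stab} and~\eqref{eq:SZ-W11-stab}. The only difference is that you make explicit the step the paper leaves implicit, namely bounding $\dashint_{\omega_0(T)} \abs{\Pi_1 v} \dx$ by applying~\eqref{eq:SZ-W11-stab} on each simplex $T' \subset \omega_0(T)$ together with shape regularity, which is exactly the right way to fill that gap.
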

\begin{proof}
  Note that the linear operator $\Pi_1$ projects $W^{1,1}(\Omega;\Rd)$ to $\mathcal{L}^1_2(\tria;\Rd)$ and $W^{1,1}_0(\Omega;\Rd)$ to $V_h$ and it preserves discrete traces. 
  Moreover, the linear operator $\Pi_2$ maps $L^1(\Omega;\Rd)$ to~$V_h$. 
  Thus, the operator $\Pihdiv \coloneqq \Pi_1 + \Pi_2(\identity - \Pi_1)$ is a linear projection from $W^{1,1}(\Omega;\Rd)$ to $\mathcal{L}^1_2(\tria;\Rd)$ and from $W^{1,1}_0(\Omega;\Rd)$ to $V_h$ that preserves discrete traces. 

   For all $v \in W^{1,1}_0(\Omega;\Rd)$ by Proposition~\ref{pro:Pi2-div}\ref{itm:Pi2-div} the operator $\Pi_2$ preserves the discrete divergence and we have that $v - \Pi_1 v \in W^{1,1}_0(\Omega;\Rd)$. 
   Hence, for any $v \in W^{1,1}_0(\Omega;\Rd)$ the following identity holds in $Q_h^*$
  \begin{align*}
    \divergence \Pihdiv v
    &= \divergence \Pi_1 v + \divergence\big(\Pi_2(v - \Pi_1 v)\big)
    = \divergence \Pi_1 v + \divergence(v - \Pi_1 v) = \divergence v,
  \end{align*}
  which proves~\ref{itm:fortin-basic-div}. 

  Combining Proposition~\ref{pro:Pi2-div}\ref{itm:Pi2-stab} and~\eqref{eq:SZ-W11-stab} results for all $v \in W^{1,1}(\Omega;\Rd)$ in
  \begin{align*}
    \norm{\Pihdiv v}_{L^{\infty}(T)}
    &\leq
      \norm{\Pi_1 v}_{L^{\infty}(T)} + \norm{\Pi_2( v - \Pi_1 v)}_{L^{\infty}(T)}
    \\
    &\leq
      \norm{\Pi_1 v}_{L^{\infty}(T)} + \dashint_{\omega_0(T)} \abs{v - \Pi_1 v}\,dx
    \\
    &\lesssim 
      \dashint_{\omega_0(\omega_0(T))} \abs{ v} \dx  + h_T
      \dashint_{\omega_0(\omega_0(T))} \abs{\nabla v} \dx.
  \end{align*}
  If $T$ is an interior simplex we have the domain $\omega_0(\omega_1(T))$ on the right-hand side of the estimate due to the smaller domain of dependence of $\Pi_2$, see Proposition~\ref{pro:Pi2-div}. 
\end{proof}
From the basic properties of $\Pihdiv$ by standard arguments we can derive $W^{1,p}$-stability and approximation properties. 
\begin{proposition}[Fortin operator]
  \label{pro:fortin-stab-approx}
  One has the following estimates for any $v \in W^{1,p}(\Omega;\Rd)$
  with $p \in [1,\infty]$ and all~$T \in \tria$, where the hidden constants depend only on $d$ and the shape regularity of $\tria$. 
  \begin{enumerate}
  \item \emph{(Local $W^{1,p}$-stability)} One has that \label{itm:fortin-stab} %
    \begin{align*}
      \lefteqn{\norm{\Pihdiv v }_{L^p(T)} + h_T
      \norm{\nabla \Pihdiv v }_{L^p(T)}} 
      \qquad      & \\
      &\lesssim \norm{v}_{L^p(\omega_0(\omega_0(T)))}
        + h_T\,\norm{\nabla v}_{L^p(\omega_0(\omega_0(T)))}.
    \end{align*}
  \item \emph{(Approximation)} \label{itm:fortin-approx} If additionally $v \in W^{s,p}(\Omega;\Rd)$ with $s \in \set{1,2,3}$, then we have that
    \begin{align*}
      \norm{ v-\Pihdiv v }_{L^p(T)}  +  h_T \norm{\nabla (v-\Pihdiv v) }_{L^p(T)} &\lesssim h_T^s \norm{\nabla^{s} v}_{L^p(\omega_0(\omega_0(T)))}.
    \end{align*}
  \item  \emph{(Continuity)} \label{itm:fortin-cont} 
  One has that
    \begin{align*}
      \norm{\nabla \Pihdiv v }_{L^p(T)} &\lesssim \norm{\nabla v}_{L^p(\omega_0(\omega_0(T)))}.
    \end{align*}
  \end{enumerate}
      If $T \cap \partial \Omega = \emptyset$, then the estimates hold with $\omega_0(\omega_0(T))$ replaced by $\omega_0(\omega_1(T))$. 
\end{proposition}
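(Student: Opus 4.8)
The plan is to deduce all three estimates from the single $L^\infty$--$W^{1,1}$ bound of Proposition~\ref{pro:fortin-basic}\ref{itm:fortin-basic-stab} together with the fact that $\Pihdiv$ is a projection onto $\mathcal{L}^1_2(\tria;\Rd)$ and hence reproduces every (globally) polynomial vector field of degree at most two. Writing $\omega \coloneqq \omega_0(\omega_0(T))$, I will repeatedly use three elementary consequences of shape regularity: the bound $\norm{w}_{L^p(T)} \le \abs{T}^{1/p}\norm{w}_{L^\infty(T)}$; the volume comparison $\abs{T} \sim \abs{\omega}$, valid because $\omega$ consists of a bounded number of simplices of comparable size; and Hölder's inequality for integral means, $\dashint_\omega \abs{v}\dx \le \abs{\omega}^{-1/p}\norm{v}_{L^p(\omega)}$.

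First I would convert the basic stability into its $L^p$ form. Applying the three facts above to Proposition~\ref{pro:fortin-basic}\ref{itm:fortin-basic-stab} gives directly $\norm{\Pihdiv v}_{L^p(T)} \lesssim \norm{v}_{L^p(\omega)} + h_T\norm{\nabla v}_{L^p(\omega)}$, which is the zeroth-order half of~\ref{itm:fortin-stab}.

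The gradient bounds in~\ref{itm:fortin-stab} and~\ref{itm:fortin-cont} are obtained together, and this continuity step is where the projection property is indispensable. Since $\Pihdiv$ reproduces constant vector fields, for any $c \in \Rd$ one has $\nabla \Pihdiv v = \nabla \Pihdiv (v-c)$; a polynomial inverse estimate on $T$ followed by the $L^p$-stability just proved, applied to $v-c$ and using $\nabla(v-c) = \nabla v$, yields $\norm{\nabla \Pihdiv v}_{L^p(T)} \lesssim h_T^{-1}\big(\norm{v-c}_{L^p(\omega)} + h_T\norm{\nabla v}_{L^p(\omega)}\big)$. Choosing $c \coloneqq \dashint_\omega v \dx$ and invoking the Poincaré inequality on the connected patch $\omega$, whose diameter is $\sim h_T$, absorbs the first term and leaves $\norm{\nabla \Pihdiv v}_{L^p(T)} \lesssim \norm{\nabla v}_{L^p(\omega)}$, which is~\ref{itm:fortin-cont}. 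Without subtracting the reproduced constant $c$ the inverse estimate alone would cost a factor $h_T^{-1}$, so this Poincaré correction is the crux of the argument. Multiplying by $h_T$ and adding the previous paragraph completes~\ref{itm:fortin-stab}.

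It remains to establish the approximation estimate~\ref{itm:fortin-approx} by a Bramble--Hilbert (Deny--Lions) argument. For $s \in \set{1,2,3}$ let $\pi$ be the averaged Taylor polynomial of degree $s-1 \le 2$ of $v$ over $\omega$; as a global polynomial of degree at most two it lies in $\mathcal{L}^1_2(\tria;\Rd)$ and is therefore reproduced, so $v - \Pihdiv v = (v-\pi) - \Pihdiv(v-\pi)$. Estimating the second summand by the stability~\ref{itm:fortin-stab} and, for its gradient, by the continuity~\ref{itm:fortin-cont}, and then inserting the Bramble--Hilbert bounds $\norm{v-\pi}_{L^p(\omega)} \lesssim h_T^s\norm{\nabla^s v}_{L^p(\omega)}$ and $\norm{\nabla(v-\pi)}_{L^p(\omega)} \lesssim h_T^{s-1}\norm{\nabla^s v}_{L^p(\omega)}$, gives~\ref{itm:fortin-approx} after collecting the powers of $h_T$. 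Finally, the interior refinement replacing $\omega_0(\omega_0(T))$ by $\omega_0(\omega_1(T))$ when $T \cap \partial\Omega = \emptyset$ is inherited from the corresponding improvement in Proposition~\ref{pro:fortin-basic}, since every step above uses only that the patch is connected with diameter $\sim h_T$ and volume $\sim \abs{T}$.
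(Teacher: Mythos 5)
Your proof is correct, and it rests on the same ingredients as the paper's — H\"older plus inverse estimates to convert the $L^\infty$ bound of Proposition~\ref{pro:fortin-basic}\ref{itm:fortin-basic-stab} into $L^p$ estimates, and the projection property combined with a Poincar\'e/Bramble--Hilbert argument to remove the $h_T^{-1}$ loss — but you assemble them in the reverse order. The paper proves the full stability~\ref{itm:fortin-stab} in one stroke (both terms are dominated by $\norm{1}_{L^p(T)}\,\norm{\Pihdiv v}_{L^{\infty}(T)}$ via inverse estimates), then proves the approximation estimate~\ref{itm:fortin-approx} by inserting an arbitrary discrete $g \in \mathcal{L}^1_2(\tria;\Rd)$ with $\Pihdiv g = g$ and only at the end choosing $g$ as the averaged Taylor polynomial, and finally obtains continuity~\ref{itm:fortin-cont} as a one-line corollary of~\ref{itm:fortin-approx} with $s=1$. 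You instead prove~\ref{itm:fortin-cont} first, by subtracting the reproduced constant $c = \dashint_{\omega} v \dx$ and invoking Poincar\'e on the patch, and then feed both~\ref{itm:fortin-stab} and~\ref{itm:fortin-cont} into the Bramble--Hilbert argument for~\ref{itm:fortin-approx}. The two routes are mathematically equivalent: the $s=1$ case of the paper's Bramble--Hilbert step is exactly your Poincar\'e step. The paper's ordering is slightly more economical, since a single polynomial-subtraction argument yields both~\ref{itm:fortin-approx} and~\ref{itm:fortin-cont}; your ordering has the merit of making the continuity estimate self-contained and of exhibiting explicitly where the constant-reproduction property is indispensable.

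One caveat: your closing claim that every step needs ``only that the patch is connected with diameter $\sim h_T$ and volume $\sim \abs{T}$'' is too weak as stated. Connectedness alone does not give Poincar\'e or Bramble--Hilbert inequalities with controlled constants (think of rooms-and-passages domains). What is actually needed — and what the paper spells out, citing \cite{DupSco1980} and the theory of John domains — is that $\omega_0(\omega_0(T))$ is a finite union of domains star-shaped with respect to balls, or a John domain, with constants governed by the shape regularity of~$\tria$. For patches of shape-regular simplices this holds, so your argument goes through, but the justification should be stated in these terms.
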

\begin{proof}
  The first property~\ref{itm:fortin-stab} follows directly from Proposition~\ref{pro:fortin-basic}\ref{itm:fortin-basic-stab} applying inverse estimates, H\"older's inequality and $\abs{T} \eqsim \abs{\omega_0(\omega_0(T))}$ ensured by shape regularity 
  \begin{align*}
    \lefteqn{\norm{\Pihdiv v }_{L^p(T)} + h_T
    \norm{\nabla \Pihdiv v }_{L^p(T)}} \qquad
    &
    \\
    &\lesssim \norm{1}_{L^p(T)} 
      \norm{\Pihdiv v}_{L^{\infty}(T)}
      \\
    &\lesssim \norm{1}_{L^p(T)}  \Bigg(
      \dashint_{\omega_0(\omega_0(T))} \abs{v} \dx + 
      h_T \dashint_{\omega_0(\omega_0(T))} \abs{\nabla v} \dx \Bigg).
  \end{align*}
  To prove~\ref{itm:fortin-approx} let $g \in \mathcal{L}^1_2(\tria;\Rd)$ by arbitrary. 
  Then, thanks to the projection property we have that $\Pihdiv g = g$. Adding and subtracting $g$ and using~\ref{itm:fortin-stab} yields
  \begin{align*}
    \lefteqn{\norm{ v-\Pihdiv v }_{L^p(T)}  + h_T
    \norm{ \nabla (v-\Pihdiv v) }_{L^p(T)} } \qquad
    &
    \\
    &\lesssim
      \norm{ v- g }_{L^p(T)}  + 
      h_T \norm{ \nabla(v- g) }_{L^p(T)}  
    \\
    &\quad
      + \norm{\Pihdiv( g- v )}_{L^p(T)}
      + h_T
      \norm{ \nabla \Pihdiv( g- v )}_{L^p(T)}
    \\
    &\lesssim 
      \norm{ v- g }_{L^p(\omega_0(\omega_0(T))}  + h_T
      \norm{ \nabla (v- g) }_{L^p(\omega_0(\omega_0(T))}.
  \end{align*}  
  Choosing~$g$ as the averaged Taylor polynomial of order~$s$ (or the best approximating polynomial of order~$s-1$) we obtain by the Bramble--Hilbert lemma in the version of~\cite[Thm.~(4.3.8)]{BreSco2008} that
  \begin{align*}
      \norm{ v-\Pihdiv v }_{L^p(T)}  +  h_T \norm{\nabla (v-\Pihdiv v) }_{L^p(T)}
    &\lesssim h_T^s\,\norm{\nabla^s v}_{L^p(\omega_0(\omega_0(T)))}.
  \end{align*}
  Even if $\omega_0(\omega_0(T))$ is not star-shaped with respect to a ball it is the finite union of such domains. 
  We refer to~\cite{DupSco1980} for the Bramble--Hilbert lemma in this situation. 
  It is also possible to work in the class of John domains and use \Poincare{}'s inequality. 
  See for example~\cite{DieRuzSch10} for \Poincare{}'s inequality on John domains. 
  The John constants of~$\omega_0(\omega_0(T))$ only depend on the shape regularity of~$\tria$. This proves~\ref{itm:fortin-approx}.

  Applying~\ref{itm:fortin-approx} with~$s=1$ shows that
  \begin{align*}
    \norm{\nabla \Pihdiv v }_{L^p(T)} &\leq
    \norm{\nabla v }_{L^p(T)} +
    \norm{\nabla (v-\Pihdiv v) }_{L^p(T)}
    \lesssim \norm{\nabla v}_{L^p(\omega_0(\omega_0(T)))}.
  \end{align*}
  This proves~\ref{itm:fortin-cont}. 
  
  Due to the improved estimate in Proposition~\ref{pro:fortin-basic}
    all estimates hold with the smaller set $\omega_0(\omega_1(T))$ on the right-hand side for interior simplices $T \in \tria$.  
\end{proof}
As usual the local estimates in Proposition~\ref{pro:fortin-stab-approx} imply the corresponding global versions. 
We recall the mesh size function given as $h = \sum_{T \in \tria} \indicator_T h_T$.  
\begin{corollary}
  \label{cor:fortin-global}
  For any $v \in W^{s,p}(\Omega;\Rd)$ with $s \in \set{1,2,3}$ and $p \in [1,\infty]$ we have
  \begin{align*}
    \norm{\Pihdiv v }_{L^p(\Omega)} + 
    \norm{h \nabla \Pihdiv v }_{L^p(\Omega)}
    &\lesssim \norm{v}_{L^p(\Omega)}
      + \norm{h \nabla v}_{L^p(\Omega)},
    \\
    \norm{ v-\Pihdiv v }_{L^p(\Omega)}  +  \norm{h \nabla (v-\Pihdiv v) }_{L^p(\Omega)} &\lesssim  \norm{h^s \nabla^{s} v}_{L^p(\Omega)},
    \\
    \norm{\nabla \Pihdiv v }_{L^p(\Omega)} &\lesssim \norm{\nabla v}_{L^p(\Omega)}.
  \end{align*} 
  The hidden constants depend only on $d$ and the shape regularity of $\tria$.
\end{corollary}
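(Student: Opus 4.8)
The plan is to obtain each of the three global estimates directly from the corresponding local estimate in Proposition~\ref{pro:fortin-stab-approx} by a standard summation argument, treating the cases $p \in [1,\infty)$ and $p = \infty$ separately. First suppose $p \in [1,\infty)$. I would raise the local stability estimate to the $p$-th power, using $(a+b)^p \lesssim a^p + b^p$ to separate the two additive terms on each side, and then sum over all $T \in \tria$. On the left-hand side the simplices tile $\Omega$, so $\sum_{T} \norm{\Pihdiv v}_{L^p(T)}^p = \norm{\Pihdiv v}_{L^p(\Omega)}^p$, and since $h \equiv h_T$ on $T$ one likewise gets $\sum_T h_T^p \norm{\nabla \Pihdiv v}_{L^p(T)}^p = \norm{h\,\nabla \Pihdiv v}_{L^p(\Omega)}^p$. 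For the right-hand side I would expand each patch norm as $\norm{v}_{L^p(\omega_0(\omega_0(T)))}^p = \sum_{T' \subset \omega_0(\omega_0(T))} \norm{v}_{L^p(T')}^p$ and interchange the order of summation, which puts the weight $\#\{T \in \tria \colon T' \subset \omega_0(\omega_0(T))\}$ on each $\norm{v}_{L^p(T')}^p$.

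The two geometric facts that close the argument are both consequences of shape regularity: the overlap count $\#\{T \colon T' \subset \omega_0(\omega_0(T))\}$ is bounded by a constant depending only on $d$ and the shape regularity, and the mesh is locally quasi-uniform, i.e.\ $h_T \eqsim h_{T'}$ whenever $T' \subset \omega_0(\omega_0(T))$. The first fact yields $\sum_T \norm{v}_{L^p(\omega_0(\omega_0(T)))}^p \lesssim \norm{v}_{L^p(\Omega)}^p$, while combining both facts handles the gradient term, since $h_T^p \norm{\nabla v}_{L^p(\omega_0(\omega_0(T)))}^p \eqsim \norm{h \nabla v}_{L^p(\omega_0(\omega_0(T)))}^p$ and summing then produces $\norm{h \nabla v}_{L^p(\Omega)}^p$. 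This proves the first (stability) estimate. The continuity estimate follows in exactly the same manner from its local counterpart, with no $h$-weight to track, and the approximation estimate follows analogously using in addition $h_T^{s} \eqsim h_{T'}^s$ to match $\sum_T h_T^{sp}\norm{\nabla^s v}_{L^p(\omega_0(\omega_0(T)))}^p$ with $\norm{h^s \nabla^s v}_{L^p(\Omega)}^p$.

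For $p = \infty$ the argument is even simpler: I would take the supremum over $T \in \tria$ rather than summing. Since each patch satisfies $\omega_0(\omega_0(T)) \subset \Omega$, we have $\norm{v}_{L^\infty(\omega_0(\omega_0(T)))} \leq \norm{v}_{L^\infty(\Omega)}$, and local quasi-uniformity gives $h_T \norm{\nabla v}_{L^\infty(\omega_0(\omega_0(T)))} \lesssim \norm{h \nabla v}_{L^\infty(\Omega)}$; taking the supremum over $T$ then yields all three estimates in the $L^\infty$ setting. I do not anticipate a genuine obstacle here, as the entire content is the local-to-global passage: the only quantitative inputs are the finite overlap of the patches $\omega_0(\omega_0(T))$ and the local comparability of mesh sizes, both of which are standard consequences of shape regularity and independent of $v$, $p$, and $s$.
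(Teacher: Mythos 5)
Your proposal is correct and is precisely the standard local-to-global summation argument that the paper invokes (the paper gives no explicit proof, stating only that ``as usual the local estimates in Proposition~\ref{pro:fortin-stab-approx} imply the corresponding global versions''). The two geometric inputs you identify --- finite overlap of the patches $\omega_0(\omega_0(T))$ and local comparability $h_T \eqsim h_{T'}$ for $T' \subset \omega_0(\omega_0(T))$, both consequences of shape regularity --- together with the separate treatment of $p=\infty$ by suprema, are exactly what is needed.
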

\begin{remark}[Orlicz spaces]
  Corresponding local and global estimates in terms of Orlicz functions can be obtained applying the methods of~\cite{Belenki12}. Such estimates are useful in the context of non-Newtonian fluids.
\end{remark}

\begin{remark}[Domain of dependence]
  It follows from Proposition~\ref{pro:fortin-basic} that the domain of dependence of $(\Pihdiv v)|_T$ is at most $\omega_0(\omega_0(T))$. If $T \cap \partial \Omega = \emptyset$, then it is reduced to~$\omega_0(\omega_1(T))$. 
 In the following we show that it is possible to reduce this domain of dependence for interior simplices even further.

  Suppose that $T \cap \partial \Omega = \emptyset$. 
  Then the domain of dependence of~$\Pi_2$ is $\omega_1(T)$ and the one of $\Pi_1$ is $\omega_0(T)$. 
  The composition~$\Pi_2\Pi_1$ is the crucial term in~\eqref{eq:def-Phihdiv} and leads to the fact that the domain of dependence can be as large as~$\omega_0(\omega_1(T))$. 
  But in the composition $\Pi_2 \Pi_1$ the operator $\Pi_2$ is applied only to discrete functions. 
  Some of these discrete functions are not seen by~$\Pi_2$.  This observation leads to the following improvement. 

  The construction of the Scott--Zhang type operator~$\Pi_1$ requires local basis function. 
  The set of basis functions can be divided into two groups: some associated to nodes and others associated to edges. 
  The coefficients of~$\Pi_1 v$ in this local basis are chosen as certain integral averages over simplices. 
  For a node~$i$ the average is taken over a simplex contained in the larger nodal patch~$\omega_0(i) = \support \phi_i$. 
  For an edge $[i,j]$ the average is taken over a simplex contained in the smaller set~$\omega_{i,j}$. 
  Therefore, it is useful to modify the local basis functions associated to nodes in such a manner that $\Pi_2$ has only a small impact on them. 

  Note that the functions $\phi_i^2$ for $i \in \nodes^\circ$ and $\phi_i \phi_j$ for $[i,j] \in \edgesint$ form a basis of~$\mathcal{L}^2_1(\tria) \cap W^{1,1}_0(\Omega)$. 
  Let us now replace each nodal basis function $\phi_i^2$ by the function
  \begin{align*}
    \rho_i \coloneqq  \phi_i^2 -\frac{2}{d+1}\,\sum_{\lbrace j \in \nodes\colon [i,j] \in \edges\rbrace} \phi_i \phi_j\qquad\text{for all }i \in \nodes^\circ.
  \end{align*}
  These new basis functions are still supported in $\omega_0(i)$ for any $i \in \nodesint$. 
  Then, denoting the $k$-th unit vector by $e_k \in \setR^d$, the vectorial functions $ \phi_i e_k$, for $i \in \nodesint$ and $\phi_i \phi_j e_k$, for $ [i,j]\in \edgesint$, and $k \in \{1, ..., d\}$ form a basis of $V_h$. 
  This choice of basis functions has the advantage that the application of $\Pi_2$ to $\rho_ie_k$ does not extend its support
    for any $i \in \nodesint$, that is $\support(\Pi_2(\rho_i e_k)) \subset \omega_0(i)$.
  Furthermore, for $[i,j] \in \edgesint$ we have $\support(\Pi_2(\phi_i \phi_j) e_k) \subset \omega_1(\omega_{i,j})$.  
  Hence, for interior simplices $T \in \tria $ with $T \cap \partial \Omega = \emptyset$ the domain of dependence of the resulting Fortin operator $(\Pihdiv v)|_T$ is a subset of $\omega_0(T) \cup \omega_1(\omega_1(T))$. 

  All interior local estimates in Proposition~\ref{pro:fortin-basic} and \ref{pro:fortin-stab-approx} then hold with this set on the right-hand side. 
  Note that in dimension $d=2$ we have that $\omega_0(T) \cup \omega_1(\omega_1(T)) = \omega_0(T)$ and thus the Fortin operator is as local as the Scott--Zhang type operator.  
  In dimension $d=3$ the gain compared to $\omega_0(T)$ is not as high because  a simplex contains non-intersecting edges, and hence $\omega_0(T) \subsetneq \omega_1(\omega_1(T))$. 
\end{remark}

\subsection{\texorpdfstring{$L^p$-stable Fortin operator}{Lp-stable Fortin operator}}
\label{sec:LpFortin}

In this section we discuss a modification of the Fortin operator that allows for local $L^p$-stability at the cost of losing preservation of discrete traces. 
Such an operator is employed in the numerical analysis of singularly perturbed Stokes equations. 
More precisely, it ensures uniform estimates with respect to the perturbation parameter $\varepsilon$ in the term $u - \varepsilon \Delta u$. 
In 2D for this purpose an $L^1$-stable Fortin operator was considered  in~\cite{MardalSchoeberlWinther2013}. 
The $L^1$-stability was shown for quasi-uniform and slightly graded mesh. In contrast, our modification applies to any regular mesh without the need for mesh grading conditions. 

Recall that by Proposition~\ref{pro:Pi2-div} the operator $\Pi_2$ is $L^1$-stable. 
Thus, it suffices to adapt the Scott--Zhang type operator $\Pi_1$ used the definition of the Fortin operator in \eqref{eq:def-Phihdiv}. 
Indeed, one only has to replace $\Pi_1$ by a suitable $L^1$-stable version. 
By slight adaptation of the proofs of Propositions~\ref{pro:fortin-basic} and \ref{pro:fortin-stab-approx} we arrive at the following result. 

\begin{proposition}[$L^p$-stable Fortin operator]
  \label{pro:fortin2}
  There exists a linear projection operator $\tPihdiv \colon L^{1}(\Omega;\Rd) \to V_h$ that satisfies the following properties.
  \begin{enumerate}
  \item \label{itm:fortin2-div}
    \emph{(Divergence preserving)}
    For any $v \in W^{1,1}_0(\Omega;\Rd)$ one has that 
    \begin{align*}
      \skp{\divergence\,\tPihdiv v}{q_h}= \skp{\divergence{v}}{q_h} \qquad \text{for all } q_h \in Q_h.
    \end{align*}  
  \item
    \label{itm:fortin2-stab}
    \emph{(Local $L^{p}$-stability)}
    For any $v \in L^{p}(\Omega;\Rd)$ and $p \in [1, \infty]$ we have  that 
    \begin{align*}
      \norm{\tPihdiv v}_{L^{p}(T)}
      &\lesssim
        \norm{v}_{L^p(\omega_0(\omega_0(T)))}.
    \end{align*}
 \item
  \emph{(Approximation)} \label{itm:fortin2-approx} For any $v \in W^{s,p}(\Omega;\Rd) \cap W^{1,1}_0(\Omega;\Rd)$ with $s \in \set{1,2,3}$ and $p \in [1, \infty]$ we have that
    \begin{align*}
      \norm{ v-\tPihdiv v }_{L^p(T)}  +  h_T \norm{\nabla (v-\tPihdiv v) }_{L^p(T)} &\lesssim h_T^s \norm{\nabla^{s} v}_{L^p(\omega_0(\omega_0(T)))}.
    \end{align*}
  \item  
  \emph{(Continuity)} \label{itm:fortin2-cont} 
  For any $v \in W^{1,p}_0(\Omega)$ with $p \in [1, \infty]$ one has that
  \begin{align*}
    \norm{\nabla \tPihdiv v }_{L^p(T)} &\lesssim \norm{\nabla v}_{L^p(\omega_0(\omega_0(T)))}. 
  \end{align*} 
  \end{enumerate}
    The hidden constants in \ref{itm:fortin2-stab}--\ref{itm:fortin2-cont} depend only on $d$ and the shape regularity of $\tria$.
      If $T \cap \partial \Omega = \emptyset$, then the estimates hold with $\omega_0(\omega_0(T))$ replaced by $\omega_0(\omega_1(T))$. 
\end{proposition}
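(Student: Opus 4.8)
The plan is to follow the remark preceding the proposition: reuse the entire construction of $\Pihdiv$ from~\eqref{eq:def-Phihdiv}, but replace the trace-preserving Scott--Zhang operator $\Pi_1$ by an $L^1$-stable quasi-interpolation operator $\widetilde\Pi_1$, and set $\tPihdiv \coloneqq \widetilde\Pi_1 + \Pi_2(\identity - \widetilde\Pi_1)$. First I would fix the choice of $\widetilde\Pi_1$: the standard Scott--Zhang variant that averages over $(d-1)$-dimensional faces (or over $d$-simplices) using $L^2$-duals of the local shape functions, which is a linear projection onto $\mathcal{L}^1_2(\tria;\Rd)$, maps $W^{1,1}_0$ into $V_h$, and is locally $L^1$-stable in the sense $\norm{\widetilde\Pi_1 v}_{L^\infty(T)} \lesssim \dashint_{\omega_0(T)} \abs{v}\dx$, together with the analogous estimate for $\nabla \widetilde\Pi_1 v$. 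Because $\widetilde\Pi_1$ is defined for merely integrable data it extends to $L^1(\Omega;\Rd)$, and since $\Pi_2$ is already defined on $L^1$ by~\eqref{eq:Pi2o-summaryL1}, the composite $\tPihdiv$ makes sense on all of $L^1(\Omega;\Rd)$. The projection property is immediate: on $g \in \mathcal{L}^1_2(\tria;\Rd)$ one has $\widetilde\Pi_1 g = g$, hence $\tPihdiv g = g + \Pi_2(g-g) = g$.

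For the divergence property~\ref{itm:fortin2-div} I would argue exactly as in the proof of Proposition~\ref{pro:fortin-basic}\ref{itm:fortin-basic-div}. For $v \in W^{1,1}_0(\Omega;\Rd)$ the difference $v - \widetilde\Pi_1 v$ again lies in $W^{1,1}_0(\Omega;\Rd)$ because $\widetilde\Pi_1$ preserves the homogeneous boundary condition, so Proposition~\ref{pro:Pi2-div}\ref{itm:Pi2-div} gives $\skp{\divergence \Pi_2(v-\widetilde\Pi_1 v)}{\phi_k} = \skp{\divergence(v-\widetilde\Pi_1 v)}{\phi_k}$ for all $k \in \nodes$, and the $\divergence \widetilde\Pi_1 v$ terms cancel, yielding $\skp{\divergence\,\tPihdiv v}{\phi_k} = \skp{\divergence v}{\phi_k}$. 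Testing against the basis $\set{\phi_k}$ of $\mathcal{L}^1_1(\tria)$ spanning $Q_h$ (up to the mean-value constraint, which is harmless since both sides are compatible with $\skp{\divergence v}{1}=0$) gives the claim for all $q_h \in Q_h$. The only point requiring care is that $\widetilde\Pi_1$ genuinely maps $W^{1,1}_0$ into $V_h$, i.e.\ respects the zero boundary trace; this is built into the face-averaging construction provided boundary faces are averaged over boundary faces.

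For the stability~\ref{itm:fortin2-stab} I would combine the $L^1$-stability of $\Pi_2$ from Proposition~\ref{pro:Pi2-div}\ref{itm:Pi2-stab} with that of $\widetilde\Pi_1$, writing $\norm{\tPihdiv v}_{L^\infty(T)} \le \norm{\widetilde\Pi_1 v}_{L^\infty(T)} + \norm{\Pi_2(v - \widetilde\Pi_1 v)}_{L^\infty(T)} \lesssim \dashint_{\omega_0(T)}\abs{v}\dx + \dashint_{\omega_0(T)}\abs{v-\widetilde\Pi_1 v}\dx \lesssim \dashint_{\omega_0(\omega_0(T))}\abs{v}\dx$, where the last step uses the $L^1$-stability of $\widetilde\Pi_1$ once more on the enlarged patch; the $L^p$-statement then follows by the inverse estimate and H\"older argument used in Proposition~\ref{pro:fortin-stab-approx}\ref{itm:fortin-stab}. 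For approximation~\ref{itm:fortin2-approx} and continuity~\ref{itm:fortin2-cont} I would repeat verbatim the Bramble--Hilbert argument of Proposition~\ref{pro:fortin-stab-approx}, inserting a polynomial $g$ with $\tPihdiv g = g$ and invoking~\ref{itm:fortin2-stab}; note that continuity is stated only for $W^{1,p}_0$, matching the domain on which the divergence/projection structure is available. The interior estimates with $\omega_0(\omega_1(T))$ follow from the reduced domain of dependence of $\Pi_2$ exactly as before. I expect the genuine obstacle to be purely bookkeeping rather than conceptual: one must verify that an $L^1$-stable Scott--Zhang operator with all four required properties (projection, boundary-trace preservation, local $L^1$-stability for the value and for the gradient) exists, which is standard but needs the correct face/simplex averaging functionals; once $\widetilde\Pi_1$ is in hand, every remaining step is a transcription of the proofs of Propositions~\ref{pro:fortin-basic} and~\ref{pro:fortin-stab-approx} with $\Pi_1$ replaced by $\widetilde\Pi_1$.
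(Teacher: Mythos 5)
Your overall strategy is the same as the paper's: keep the formula $\tPihdiv \coloneqq \overline{\Pi}_1 + \Pi_2(\identity - \overline{\Pi}_1)$, replace the trace-preserving Scott--Zhang operator by an $L^1$-stable quasi-interpolant, and rerun the proofs of Propositions~\ref{pro:fortin-basic} and~\ref{pro:fortin-stab-approx}. The genuine gap is exactly the step you set aside as ``bookkeeping'': on the given mesh $\tria$ no standard face- or simplex-averaging Scott--Zhang variant has all the properties you require, and the two candidates you name each fail. The face-averaging variant preserves zero traces, but it is not defined on $L^1(\Omega;\Rd)$ at all, since $L^1$-functions have no traces on $(d-1)$-dimensional faces; hence it cannot deliver \ref{itm:fortin2-stab} for $p=1$, and your assertion that it ``extends to $L^1(\Omega;\Rd)$'' is false. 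The simplex-averaging variant of \cite[p.~491]{SZ.1990} is defined and locally stable on $L^1$, but it does not map $W^{1,1}_0(\Omega;\Rd)$ into $V_h$: the average of a zero-trace function over a simplex meeting the boundary is generically nonzero. Then $v - \widetilde{\Pi}_1 v \notin W^{1,1}_0(\Omega;\Rd)$, so Proposition~\ref{pro:Pi2-div}\ref{itm:Pi2-div} is not applicable and your proof of \ref{itm:fortin2-div} collapses; moreover $\tPihdiv$ would not map into $V_h$ as the statement demands. In fact your list of requirements is internally inconsistent: since $\Pi_2$ maps into $\edgespace \subset V_h$, the statement forces $\widetilde{\Pi}_1$ to map all of $L^1(\Omega;\Rd)$ into $V_h$; but any locally $L^1$-stable operator mapping $W^{1,1}_0(\Omega;\Rd)$ into $V_h$ maps, by density of $W^{1,1}_0$ in $L^1$ and $L^1$-continuity, every $v\in L^1(\Omega;\Rd)$ into the closed subspace $V_h$, and therefore cannot fix discrete functions with nonzero boundary values. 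So $\widetilde{\Pi}_1$ cannot be ``a linear projection onto $\mathcal{L}^1_2(\tria;\Rd)$'', which you use both in your projection argument ($\tPihdiv g = g$ for all $g\in\mathcal{L}^1_2(\tria;\Rd)$) and, implicitly, in the Bramble--Hilbert step where arbitrary polynomials must be reproduced.

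The missing idea, which is the actual content of the paper's proof, is a boundary construction reconciling these demands: extend $\tria$ by one layer of simplices to a partition $\widetilde{\tria}$ of a larger domain $\widetilde{\Omega}$; take the $L^1$-stable simplex-averaging operator $\widetilde{\Pi}_1$ on $\widetilde{\tria}$, choosing for each Lagrange node on $\partial\Omega$ an averaging $d$-simplex lying \emph{outside} $\Omega$; and set $\overline{\Pi}_1 v \coloneqq \big(\widetilde{\Pi}_1 \widetilde{v}\big)|_\Omega$, where $\widetilde{v}$ is the extension of $v$ by zero. Since the exterior averaging simplices only see $\widetilde{v}=0$, every $\overline{\Pi}_1 v$ has vanishing boundary coefficients, so $\overline{\Pi}_1 \colon L^1(\Omega;\Rd) \to V_h$ is a linear projection (onto $V_h$, not onto $\mathcal{L}^1_2(\tria;\Rd)$) with the local $L^1$-stability \eqref{eq:est-localL1}; and because the approximation analysis can be carried out on the extended mesh, where $\widetilde{\Pi}_1$ does reproduce polynomials locally and zero extensions of $W^{1,1}_0$-functions remain admissible, one obtains \ref{itm:fortin2-approx} for $v \in W^{s,p}(\Omega;\Rd)\cap W^{1,1}_0(\Omega;\Rd)$ --- this is precisely the ``difficulty with the approximation properties due to zero traces'' that the paper's proof flags and your plan does not address (it is also why the statement restricts approximation to zero-trace functions). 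Once $\overline{\Pi}_1$ is constructed this way, the remainder of your argument --- divergence via Proposition~\ref{pro:Pi2-div}\ref{itm:Pi2-div}, stability via Proposition~\ref{pro:Pi2-div}\ref{itm:Pi2-stab}, and the transcription of Proposition~\ref{pro:fortin-stab-approx} --- goes through as you describe.
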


\begin{proof} 
  To prove the existence of a Fortin operator $\tPihdiv$ we use a modified Scott--Zhang type operator $\overline{\Pi}_1 \colon L^1(\Omega;\setR^d) \to V_h$ and define, with $\Pi_2$ as in \eqref{def:Pi2},
  \begin{align}
    \label{eq:def-Phihdiv2}
    \tPihdiv \coloneqq \overline{\Pi}_1 + \Pi_2(\identity - \overline{\Pi}_1).
  \end{align}
  We employ the locally $L^1$-stable variant of the Scott--Zhang operator as outlined in \cite[p.~491]{SZ.1990}. 
  In this version the averages are taken over $d$-simplices. 

  To avoid difficulties with the approximation properties due to zero traces we start with an $L^1$-stable Scott--Zhang operator $\widetilde{\Pi}_1$ on a regular simplicial partition~$\widetilde{\tria}$ that extends~$\tria$ by one layer of simplices. 
  The domain covered by $\widetilde{\tria}$ is denoted by $\widetilde{\Omega}$. 
  The weighted average integrals associated to each Lagrange node~$\ell$ of~$\mathcal{L}^1_2(\widetilde{\tria})$ are supported on $d$-simplices that contain~$\ell$. 
  For each node $\ell \in \partial \Omega$ on the boundary we choose a $d$-simplex which lies outside of~$\Omega$, i.e., that is contained in~$\widetilde{\tria} \setminus \tria$. 
  Consequently, the operator~$\widetilde{\Pi}_1$ is a projection from~$L^1(\widetilde{\Omega})$ to $\mathcal{L}^1_2(\widetilde{\tria})$ which is locally $L^1$-stable. 
  The specific choice of the $d$-simplices at the boundary ensures that functions $w$ which are zero outside of~$\Omega$ are mapped to discrete functions that are zero outside of~$\Omega$. 

 For every $v \in L^1(\Omega)$ let $\widetilde{v}$ denote the zero extension of~$v$ to $\widetilde{\Omega}$. 
 Note that functions in~$W^{1,1}_0(\Omega)$ extend to functions in~$W^{1,1}_0(\widetilde{\Omega})$.
  We define the linear projection $\overline{\Pi}_1\colon\, L^1(\Omega) \to V_h$ by $\overline{\Pi}_1 v \coloneqq \big(\widetilde{\Pi}_1 (\widetilde{v}\big))|_\Omega$.
  The local $L^1$-stability of~$\widetilde{\Pi}_1$ implies that
  \begin{align}\label{eq:est-localL1}
    \norm{\overline{\Pi}_1 v}_{L^{\infty}(T)} \lesssim \dashint_{\omega_0(T)} \abs{v} \dx \quad \text{ for all } T \in \tria \text{ and } v \in L^1(\Omega).
  \end{align}
  Finally, since $\overline{\Pi}_1 \colon L^1(\Omega;\setR^d) \to V_h$ is a linear projection and by Proposition~\ref{pro:Pi2-div} the operator $\Pi_2 \colon L^1(\Omega;\setR^d) \to V_h$ is linear, we have that the operator $\tPihdiv$  is a linear projection mapping $L^1(\Omega;\setR^d) \to V_h$. 
  Now, the properties~\ref{itm:fortin2-div}--\ref{itm:fortin2-cont} follow as in the proof of Propositions~\ref{pro:fortin-basic} and \ref{pro:fortin-stab-approx} with minor modifications. 
\end{proof}

The corresponding global estimates follow immediately, cf.~Corollary~\ref{cor:fortin-global}. 

\section{Alternative finite element pairs}
\label{sec:alternative}

In this section we discuss variants of the Taylor--Hood element. In Section~\ref{sec:reduced} we present an element with a reduced velocity space using linear functions and tangential edge bubble functions only. In Section~\ref{sec:enriched-taylor-hood} we discuss the opposite situation of the pressure space enriched by piecewise constant functions.

\subsection{Reduced Taylor--Hood element}
\label{sec:reduced}
In this section we introduce an inf-sup stable finite element pair by reducing the velocity pace. 
This is based on the key observation, that the divergence correcting operator in Section~\ref{sec:Pi2} uses only the space~$\edgespace$ spanned by interior tangential edge bubble functions, see~\eqref{eq:edgespace}. 
This allows us to reduce the velocity space choosing
\begin{align*}
  V_h^- \coloneqq \big(\mathcal{L}^1_1(\tria;\mathbb{R}^d) \cap W^{1,1}_0(\Omega)\big) + \edgespace\qquad\text{and}\qquad Q_h \coloneqq \mathcal{L}^1_1(\tria) \cap L^2_0(\Omega).
\end{align*}
Instead of $\Pi_1$ we then use the Scott--Zhang type operator~ $\Pi_1^-\colon W^{1,1}_0(\Omega;\mathbb{R}^d) \to V_h^-$ as described in~\cite{SZ.1990}. 
We  define our Fortin operator $\Pihdiv^-\colon  W^{1,1}_0(\Omega;\mathbb{R}^d) \to V_h^-$ via
\begin{align*}
  \Pihdiv^-\coloneqq \Pi_1^- + \Pi_2(\identity - \Pi_1^-).
\end{align*}
Hence, we may apply the arguments in the proof of Propositions \ref{pro:fortin-basic} and \ref{pro:fortin-stab-approx} to conclude the following result. 
The global estimates follow as before, cf.~Corollary~\ref{cor:fortin-global}.
\begin{proposition}[Fortin operator]
  \label{pro:fortin-stab-approx2}
  We have the following estimates for all $v \in W_0^{1,p}(\Omega;\Rd)$
  with $p \in [1,\infty]$ and all~$T \in \tria$, where the hidden constants depend only on $d$ and the shape regularity of $\tria$. 
  \begin{enumerate}
  \item \label{itm:fortin-basic-div2}
    \emph{(Divergence preserving)}
    For all $v \in W^{1,1}_0(\Omega;\Rd)$ we have that
    \begin{align*}
      \skp{\divergence\,\Pihdiv^- v}{q_h}= \skp{\divergence{v}}{q_h} \qquad \text{ for all }q_h \in Q_h.
    \end{align*}  
  \item \emph{(Local $W^{1,p}$-stability)} One has that \label{itm:fortin-stab2} %
    \begin{align*}
      \lefteqn{\norm{\Pihdiv^- v }_{L^p(T)} + h_T
      \norm{\nabla \Pihdiv^- v }_{L^p(T)}} \qquad
      &
      \\
      &\lesssim \norm{v}_{L^p(\omega_0(\omega_0(T)))}
        + h_T\,\norm{\nabla v}_{L^p(\omega_0(\omega_0(T)))}.
    \end{align*}
  \item \emph{(Approximation)} \label{itm:fortin-approx2} If additionally $v \in W_0^{s,p}(\Omega;\Rd)$ with $s \in \set{1,2}$, we have
    \begin{align*}
      \norm{ v-\Pihdiv^- v }_{L^p(T)}  +  h_T \norm{\nabla (v-\Pihdiv^- v) }_{L^p(T)} &\lesssim h_T^s \norm{\nabla^{s} v}_{L^p(\omega_0(\omega_0(T)))}.
    \end{align*}
  \item  \emph{(Continuity)} \label{itm:fortin-cont2} 
    One has that
    \begin{align*}
      \norm{\nabla \Pihdiv^- v }_{L^p(T)} &\lesssim \norm{\nabla v}_{L^p(\omega_0(\omega_0(T)))}.
    \end{align*}
  \end{enumerate}
  If $T \cap \partial \Omega = \emptyset$, then the estimates hold with $\omega_0(\omega_0(T))$ replaced by $\omega_0(\omega_1(T))$. 
\end{proposition}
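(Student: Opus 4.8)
The plan is to follow the proofs of Propositions~\ref{pro:fortin-basic} and~\ref{pro:fortin-stab-approx} almost verbatim, the crucial point being that the divergence correcting operator $\Pi_2$ was built to map into $\edgespace$ and that $\edgespace \subset V_h^-$ holds by the very definition of the reduced velocity space. Hence $\Pi_2 \colon L^1(\Omega;\Rd) \to V_h^-$ is available unchanged, together with all of its properties from Proposition~\ref{pro:Pi2-div}, in particular the discrete divergence identity~\ref{itm:Pi2-div} and the local $L^1$-stability~\ref{itm:Pi2-stab}. The only new ingredient is the Scott--Zhang type operator $\Pi_1^-\colon W^{1,1}_0(\Omega;\Rd) \to V_h^-$, which is a trace-preserving linear projection obeying the local $W^{1,1}$-stability~\eqref{eq:SZ-W11-stab} with $\Pi_1$ replaced by $\Pi_1^-$; this is the construction of~\cite{SZ.1990} adapted to the basis of $V_h^-$ consisting of the piecewise affine nodal functions and the interior tangential bubbles. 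Note that, in contrast to Proposition~\ref{pro:fortin-stab-approx}, every assertion is now stated for $v \in W^{1,p}_0(\Omega;\Rd)$, since $\Pi_1^-$, and thus $\Pihdiv^-$, is only defined on functions with vanishing trace.

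First I would establish the divergence preservation~\ref{itm:fortin-basic-div2}. For $v \in W^{1,1}_0(\Omega;\Rd)$ the difference $v - \Pi_1^- v$ again lies in $W^{1,1}_0(\Omega;\Rd)$, so Proposition~\ref{pro:Pi2-div}\ref{itm:Pi2-div} yields $\divergence(\Pi_2(v - \Pi_1^- v)) = \divergence(v - \Pi_1^- v)$ in $Q_h^*$. Inserting this into $\Pihdiv^- = \Pi_1^- + \Pi_2(\identity - \Pi_1^-)$ gives $\divergence \Pihdiv^- v = \divergence v$ tested against any $q_h \in Q_h$, exactly as in Proposition~\ref{pro:fortin-basic}\ref{itm:fortin-basic-div}. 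The local $W^{1,p}$-stability~\ref{itm:fortin-stab2} is then obtained precisely as in Proposition~\ref{pro:fortin-stab-approx}\ref{itm:fortin-stab}: one bounds $\norm{\Pihdiv^- v}_{L^\infty(T)}$ by $\norm{\Pi_1^- v}_{L^\infty(T)} + \norm{\Pi_2(v - \Pi_1^- v)}_{L^\infty(T)}$, passes to integral means over $\omega_0(\omega_0(T))$ using the $L^1$-stability of $\Pi_2$ together with~\eqref{eq:SZ-W11-stab}, and converts back to the $L^p$-norm by an inverse estimate, H\"older's inequality and $\abs{T} \eqsim \abs{\omega_0(\omega_0(T))}$.

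The approximation estimate~\ref{itm:fortin-approx2} is where the sole structural difference occurs, and where I expect the main (though mild) obstacle. The argument is again of Bramble--Hilbert type, writing $v - \Pihdiv^- v = (v-g) - \Pihdiv^-(v-g)$ for a comparison function $g$ with $\Pihdiv^- g = g$ and then invoking~\ref{itm:fortin-stab2}. Two modifications are needed. First, $g$ must now be taken in $V_h^-$ rather than in $\mathcal{L}^1_2(\tria;\Rd)$; since the non-bubble part of $V_h^-$ is only $\mathcal{L}^1_1$, reproduction is limited to piecewise affine functions and the Bramble--Hilbert estimate therefore caps the order at $s \in \set{1,2}$, in contrast to $s \in \set{1,2,3}$ for the full element. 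Second, a global affine polynomial does not have vanishing trace and hence does not lie in $V_h^-$, so the global-polynomial choice of Proposition~\ref{pro:fortin-stab-approx} is not directly available. I would resolve this by using $g = \Pi_1^- v$ instead, which gives the identity $v - \Pihdiv^- v = (\identity - \Pi_2)(v - \Pi_1^- v)$ and reduces matters to the intrinsic approximation property of the trace-preserving Scott--Zhang operator $\Pi_1^-$ for $v \in W^{s,p}_0(\Omega;\Rd)$ with $s \le 2$, the $\Pi_2$-correction being controlled by its $L^1$-stability and an inverse estimate; the zero-trace hypothesis ensures $v - \Pi_1^- v \in W^{1,p}_0(\Omega;\Rd)$ and takes care of the simplices meeting $\partial\Omega$.

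Finally, continuity~\ref{itm:fortin-cont2} follows by applying~\ref{itm:fortin-approx2} with $s=1$ together with the triangle inequality, just as in the proof of Proposition~\ref{pro:fortin-stab-approx}\ref{itm:fortin-cont}. The sharper interior domain of dependence, giving $\omega_0(\omega_1(T))$ in place of $\omega_0(\omega_0(T))$ for simplices $T$ with $T \cap \partial\Omega = \emptyset$, is inherited from the smaller domain of dependence of $\Pi_2$ recorded in Proposition~\ref{pro:Pi2-div}, exactly as before.
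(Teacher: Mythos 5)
Your sketch is correct and realizes exactly the plan the paper itself gives, whose entire proof is the remark that $\edgespace \subset V_h^-$ (so $\Pi_2$ and Proposition~\ref{pro:Pi2-div} apply unchanged) together with the instruction to repeat the arguments of Propositions~\ref{pro:fortin-basic} and~\ref{pro:fortin-stab-approx} for $\Pihdiv^- = \Pi_1^- + \Pi_2(\identity - \Pi_1^-)$; your treatment of \ref{itm:fortin-basic-div2}, \ref{itm:fortin-stab2} and \ref{itm:fortin-cont2} is verbatim that repetition. The one step where you genuinely depart from the template is the approximation estimate~\ref{itm:fortin-approx2}. The paper's template takes an arbitrary discrete comparison function $g$ with $\Pihdiv^- g = g$ and concludes by Bramble--Hilbert; transplanted to the reduced element this still works with $g \in \mathcal{L}^1_1(\tria;\Rd)$, because the trace-preserving Scott--Zhang operator of~\cite{SZ.1990} is naturally defined on all of $W^{1,1}(\Omega;\Rd)$ and reproduces every function in $\mathcal{L}^1_1(\tria;\Rd)$, in particular affine polynomials, so that $\Pihdiv^- g = g + \Pi_2(g - \Pi_1^- g) = g$; the reduced reproduction degree is what caps the order at $s \in \set{1,2}$, exactly as you observe. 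You instead keep the domain $W^{1,1}_0(\Omega;\Rd)$ assigned to $\Pi_1^-$ in the paper, choose $g = \Pi_1^- v$, write $v - \Pihdiv^- v = (\identity - \Pi_2)(v - \Pi_1^- v)$, and invoke the intrinsic approximation property of the zero-trace Scott--Zhang operator plus the $L^1$-stability of $\Pi_2$: indeed $\norm{\Pi_2 w}_{L^p(T)} \lesssim \norm{w}_{L^p(\omega_0(T))}$ by H\"older and shape regularity, $h_T \norm{\nabla \Pi_2 w}_{L^p(T)} \lesssim \norm{\Pi_2 w}_{L^p(T)}$ by an inverse estimate on the discrete function $\Pi_2 w$, and summing the Scott--Zhang estimates over the patch yields the bound with $\omega_0(\omega_0(T))$. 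Both routes are valid; yours buys that $\Pihdiv^-$ never has to be applied to functions without vanishing trace (and never needs a projection property at all), while the paper's buys a literal repetition of the earlier proof at the cost of implicitly extending $\Pi_1^-$ beyond $W^{1,1}_0(\Omega;\Rd)$. The interior improvement to $\omega_0(\omega_1(T))$ is inherited from the reduced domain of dependence of $\Pi_2$ in either version.
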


In 2D the finite element pair $(V_h^-,Q_h)$ has been used in \cite[p.~542]{MardalSchoeberlWinther2013} in order to construct the Fortin operator for the lowest-order Taylor--Hood element. 
In three and higher dimensions this finite element pair seems to be new. 

The benefit of the reduced element lies in the smaller number of degrees of freedom. 
Naturally, this comes at the cost of a lower approximation rate. 
These two features remind of the MINI element ~\cite[Sec.~8]{BBF.2013}. 
However, the reduced Taylor--Hood element has some advantages compared to the MINI element. 
Since the polynomial degree of the velocity functions  is at most two instead of~$d+1$ for the MINI element, one can use quadrature formulas of lower order. Furthermore, the following consideration shows that the dimensions of the reduced Taylor--Hood space is considerably smaller. 

To showcase the  we compare the degrees of freedom of $(V_h^-,Q_h)$ with the MINI element and the original Taylor--Hood element $(V_h,Q_h)$ in a specific situation. 
More precisely, we consider a standard uniform simplicial partition of the unit cube~$(0,1)^d$ tiled by $N^d$ Kuhn cubes each of which is split into $d!$ Kuhn simplices. 
The partition based on translation of such Kuhn cubes is sometimes referred to as Freudenthal's triangulation, whereas the so-called Whitney--Tucker triangulation arises from translation and reflection of Kuhn cubes, cf.~\cite{WeiFlo2011}.
Both versions lead to the same number of degrees of freedom.
Asymptotically, for large $N$ the triangulations have $N^d$ nodes, $(2^d-1)N^d$ edges and $d!N^d$ $d$-simplices. 
In Table~\ref{fig:compare} we compare the dimensions of the finite element spaces. 
One can see that in 3D the total number of degrees of freedom for the finite elment space pair is reduced from $\approx 22 N^3$ for the MINI element to $\approx 11 N^3$ for the reduced element, which represents a significant reduction.

	\begin{table}[ht!]
  \centering
  \begin{TAB}[3pt]{|c|c|c|c|c|}{|c|ccc|}
    Dimension & $\dim V_h^-$ & $\dim(V_h(\text{MINI}))$ & $\dim V_h$ & $\dim Q_h$
    \\
    2 &  $\approx 5N^2$ & $\approx 6N^2$ & $\approx 8N^2$ & $\approx N^2$
    \\
    3 &  $\approx 10N^3$ & $\approx 21N^3$ & $\approx 24N^3$ & $\approx N^3$
    \\
    d &  $\approx (d+2^d-1)N^d$ & $\approx d(d!+1)N^d$ & $\approx d\,2^d N^d$ & $\approx N^d$
  \end{TAB}
  \caption{Comparison of degrees of freedom}
  \label{fig:compare}
\end{table}

\subsection{\texorpdfstring{The augmented Taylor--Hood and the $P_2$-$P_0$ element}{The augmented Taylor--Hood and the P2-P0 element}}
\label{sec:enriched-taylor-hood}

An extension of the lowest-order Taylor--Hood element is the augmented Taylor--Hood element, for which the discrete spaces are given by
\begin{align*}
  V_h \coloneqq \mathcal{L}^1_2(\tria; \Rd) \cap W^{1,1}_0(\Omega;\Rd)\quad\text{and}\quad
  \Qaug \coloneqq (\mathcal{L}^1_1(\tria) + \mathcal{L}^0_0(\tria)) \cap L^2_0(\Omega).
\end{align*}
A further finite element pair is the $P_2$--$P_0$ element given by the pair $V_h$ and  
\begin{align*}
  Q_h^0 & \coloneqq \mathcal{L}^0_0(\tria) \cap L^2_0(\Omega) \subset \Qaug.
\end{align*}
While both finite element pairs are popular in dimension $d=2$,
computations in $d=3$ show a lack of stability (see for
example~\cite[Sec.~3]{GWW.2014}). 
Since we are not aware of a simple explicit example that underlines this observation we shall present one in this section. 

For our example we use the 3D-octahedron domain. 
We choose the most simple partition with the center as the only interior node. 
More precisely, let~$\Omega \subset \setR^3$ denote the octahedron spanned by the six points $\pm e_m \in \setR^3$, with $e_m$ denoting the $m$-th unit vector.  
Let~$\tria$ denote the partition of~$\Omega$ into eight congruent 3-simplices each of which connects one 2-face of~$\Omega$ with the center~$(0,0,0)$. For simplicity we refer to this setup as \emph{basic partition of the 3D-octahedron}.

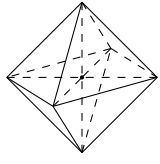
\begin{figure}[ht!]
  \centering
  \begin{tikzpicture}
    x={(0:1cm)},
    y={(45:0.6cm)},
    z={(90:1cm)},
    scale=3.5]
    \coordinate (A) at (0,0,0);
    \draw[dashed] (+1, 0, 0) -- (A);
    \draw[dashed] (-1, 0, 0) -- (A);
    \draw[dashed] ( 0,+1, 0) -- (A);
    \draw[dashed] ( 0,-1, 0) -- (A);
    \draw[dashed] ( 0, 0,+1) -- (A);
    \draw[dashed] ( 0, 0,-1) -- (A);

    \draw (0,0,1) -- (1,0,0);
    \draw (0,0,1) -- (-1,0,0);
    \draw (0,1,0) -- (1,0,0);
    \draw (0,1,0) -- (-1,0,0);
    \draw (0,-1,0) -- (1,0,0);
    \draw (0,-1,0) -- (-1,0,0);
    \draw (0,-1,0) -- (0,0,1);
    \draw (0,1,0) -- (0,0,1);
    \draw[dashed] (0,-1,0) -- (0,0,-1);
    \draw[dashed] (0,1,0) -- (0,0,-1);
    \draw[dashed] (0,0,-1) -- (1,0,0);
    \draw[dashed] (0,0,-1) -- (-1,0,0);
    \fill (0,0,0) circle (.8pt);
  \end{tikzpicture}

  \caption{3D-octahredron~$\Omega$ and its basic partition~$\tria$}
  \label{fig:3Doctahedron}
\end{figure}
The crucial observation is the following.
\begin{proposition}
  \label{pro:P2-P0-counter}
  Let $\tria$ be the basic partition of the 3D-octahedron~$\Omega$. Then the discrete pressure function
  $\bar q_h \in \mathcal{L}^0_0(\tria) \cap L^2_0(\Omega)$ on the
  3D-octrahedron with
  \begin{align*}
    \bar q_h(x) &\coloneqq \sgn(x_1) \sgn(x_2) \sgn(x_3)\qquad\text{for all }x=(x_1,x_2,x_3) \in \Omega
  \end{align*}
  satisfies 
  \begin{align}\label{eq:ZeroFortin}
    \langle \divergence v_h,\bar q_h\rangle = 0 \qquad\text{for all }v_h \in
    \mathcal{L}^1_2(\tria;\setR^3) \cap
    W^{1,1}_0(\Omega;\setR^3).
  \end{align}
\end{proposition}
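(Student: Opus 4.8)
The plan is to integrate by parts element-wise, discard the boundary faces, and reduce the pairing to a sum of fluxes across the interior faces; the claim then follows from the sign pattern of $\bar q_h$ together with the reflection symmetry of the basic partition. Since $\bar q_h$ is constant on every $T \in \tria$, I would first write
\begin{align*}
  \langle \divergence v_h, \bar q_h\rangle
  = \sum_{T \in \tria} \bar q_h|_T \int_T \divergence v_h \dx
  = \sum_{T \in \tria} \bar q_h|_T \int_{\partial T} v_h \cdot \nu \ds.
\end{align*}
Every face contained in $\partial\Omega$ contributes zero because $v_h \in W^{1,1}_0(\Omega;\setR^3)$ has vanishing trace there. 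Collecting the two copies of each interior face with opposite normals then turns the remaining terms into a sum over interior faces, each weighted by the jump of $\bar q_h$ across the face and the flux $\int_f v_h \cdot \nu_f \ds$, which is well defined since $v_h$ is continuous.

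Next I would exploit the geometry. The eight simplices are $T_{\epsilon_1\epsilon_2\epsilon_3} = [0,\epsilon_1 e_1, \epsilon_2 e_2, \epsilon_3 e_3]$ with $\epsilon_m \in \{\pm 1\}$, and each interior face lies in one of the coordinate planes $\{x_i = 0\}$. A face in $\{x_i = 0\}$ has the form $[0, \epsilon_j e_j, \epsilon_k e_k]$ with $\{i,j,k\} = \{1,2,3\}$; it is shared by the two simplices differing only in the sign of the $i$-th coordinate, its normal points along $e_i$, and the jump of $\bar q_h = \sgn(x_1)\sgn(x_2)\sgn(x_3)$ across it equals $\pm 2\,\epsilon_j\epsilon_k$. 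Since the flux therefore only involves the $i$-th component $(v_h)_i$, the pairing equals $-2(S_1+S_2+S_3)$ with
\begin{align*}
  S_i \coloneqq \sum_{\epsilon_j,\epsilon_k \in \{\pm 1\}} \epsilon_j \epsilon_k \int_{[0,\epsilon_j e_j, \epsilon_k e_k]} (v_h)_i \ds,
\end{align*}
so it suffices to prove $S_i = 0$ for each $i$.

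The crux is this last vanishing. Here I would use that the restriction $(v_h)_i|_{\{x_i = 0\}}$ is a continuous piecewise-quadratic function on the square cross-section $\Omega \cap \{x_i = 0\}$ which vanishes on the boundary of that square, because the boundary edges lie in $\partial\Omega$. Expanding this restriction in the $P_2$ nodal basis of the four-triangle partition of the square, one sees that its only interior Lagrange nodes are the center $0$ and the four midpoints of the interior edges, \emph{all of which lie on a reflection axis} $\{x_j=0\}$ or $\{x_k=0\}$ of the square. For the basis function at the center the four triangle integrals coincide by symmetry, so its contribution to $S_i$ vanishes because $\sum_{\epsilon_j,\epsilon_k}\epsilon_j\epsilon_k = 0$; for each edge-midpoint basis function the reflection fixing that node swaps the two triangles carrying its support while flipping the sign $\epsilon_j\epsilon_k$, so the two terms cancel. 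Summing over the basis yields $S_i = 0$, hence \eqref{eq:ZeroFortin}.

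I expect the reflection bookkeeping to be the main obstacle, and I would emphasize that the cancellation is genuinely special to the lowest order: every interior node of the cross-section sits on a symmetry axis, so the component of $(v_h)_i$ that is odd in both $x_j$ and $x_k$ is forced to vanish at all nodes. For a higher polynomial degree there are interior nodes off the axes, this odd component need not vanish, and the argument breaks down, consistently with the known stability for $k \geq d$.
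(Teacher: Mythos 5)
Your proof is correct, and it takes a genuinely different route from the paper's. The paper argues directly in 3D: it notes that $V_h = \mathcal{L}^1_2(\tria;\setR^3)\cap W^{1,1}_0(\Omega;\setR^3)$ is spanned by the functions $e_m\phi_0$ and $e_m b$, with $\phi_0$ the $P_1$ hat function at the origin and $b$ an interior edge bubble, computes each divergence explicitly, and observes that every one of them is even in at least one coordinate, hence orthogonal to $\bar q_h$, which is odd in all three coordinates on the symmetric domain. You instead integrate by parts element-wise, convert the pairing into a jump--flux sum over the twelve interior faces (your sign bookkeeping, giving $\langle \divergence v_h,\bar q_h\rangle = -2(S_1+S_2+S_3)$ with outward normals $\mp e_i$, is right), and then kill each $S_i$ by a purely two-dimensional argument: expand the trace $(v_h)_i|_{\{x_i=0\}}$ in the $P_2$ nodal basis of the four-triangle square --- legitimate since that partition is exactly the trace of $\tria$ on the plane and the trace vanishes on the square's boundary --- and cancel by reflections, using that every interior Lagrange node lies on a symmetry axis. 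Both proofs are symmetry arguments at heart, but they decompose differently: the paper decomposes the velocity in a 3D basis and exploits parity of divergences, while you reduce to fluxes through the coordinate planes and exploit reflection symmetry of the 2D nodal basis. The paper's version is shorter and avoids face-orientation bookkeeping; yours is longer but isolates the mechanism more sharply --- the doubly-odd part of the trace on each cross-section has no interior nodes to be carried by at degree two --- which also explains, as you note, why the obstruction is specific to the lowest-order case and is consistent with stability for $k\geq d$.
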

\begin{proof}
  The space $\mathcal{L}^1_2(\tria;\setR^3) \cap W^{1,1}_0(\Omega;\setR^3)$ is spanned by functions of the form $e_m b$ and $e_m \phi_0$, where $e_m$ is the $m$-th unit vector in $\setR^3$, with $m=1,2,3$, $b$ is an interior edge bubble function and $\phi_0 \in \mathcal{L}^1_1(\tria)$ is the Lagrange basis function associated to the origin.
  In the following we show that the divergence of each of these basis functions is even with respect to at least one of the spatial variables $x_1$, $x_2$, and~$x_3$. 
  Thus, their scalar product with~$\bar q_h$ over~$\Omega$ is zero, which proves~\eqref{eq:ZeroFortin}.

  We start with $\divergence(e_m \phi_0) = e_m \cdot \nabla \phi_0 = -\sgn(x_m)$, which is odd in~$x_m$ but even in the other variables. 
  Now, consider the function $e_m b$, where $b$ is the edge bubble function of the edge $[(0,0,0),(1,0,0)]$, i.e., 
  $b = x_1 (1-x_1-\abs{x_2} -\abs{x_3})\indicator_{\set{x_1 \geq 0}}$. 
  The other cases follow by symmetry. 
  We have that  $\divergence(e_1 \cdot b) = 1-2x_1-\abs{x_2} -\abs{x_3} \indicator_{\set{x_1 \geq 0}}$,
  which is even in~$x_2$ and $x_3$, 
  $\divergence(e_2 \cdot b) = -x_1 \sgn(x_2) \indicator_{\set{x_1 \geq  0}}$, which is even in~$x_3$, and $\divergence(e_3 \cdot b) = -x_1 \sgn(x_3) \indicator_{\set{x_1 \geq 0}}$, which is even in~$x_2$. This proves the claim. 
\end{proof}

Obviously, Proposition~\ref{pro:P2-P0-counter} shows that in 3D the pair of discrete spaces $(V_h, Q_h^0)$
does not satisfy the discrete inf-sup condition, since
\begin{align*}
  0 = \inf_{q_h \in Q_h^0\setminus \lbrace 0 \rbrace} \sup_{v_h \in V_h\setminus \lbrace 0 \rbrace} \frac{\langle \divergence v_h,q_h\rangle}{\lVert \nabla v_h \rVert_{L^2(\Omega)} \lVert q_h \rVert_{L^2(\Omega)}}.
\end{align*}
Hence, there is no linear and bounded operator
$\Pi\colon W^{1,1}_0(\Omega;\mathbb{R}^3) \to V_h$ that preserves the discrete divergence in the sense that
\begin{align*}
  \langle \divergence (v - \Pi v),q_h\rangle = 0\qquad\text{for all }v\in W^{1,1}_0(\Omega;\mathbb{R}^3)\text{ and any }q_h \in Q_h^0.
\end{align*}
The same conclusions certainly also hold for the augmented Taylor--Hood element 
\begin{align*}
   V_h \coloneqq
    \mathcal{L}^1_2(\tria;\setR^3) \cap
  W^{1,1}_0(\Omega;\setR^3)
  \quad\text{and}\quad
  \Qaug \coloneqq
        \big(\mathcal{L}^1_1(\tria) + \mathcal{L}^0_0(\tria)\big) \cap L^2_0(\Omega),
\end{align*}
since only the pressure space is enriched. 
In particular, one cannot extend our Fortin operator for the Taylor--Hood element to the augmented Taylor--Hood element.

\bibliographystyle{plain}
\bibliography{taylor}

\end{document}